\newtheorem{theorem}{Theorem}[section]
\newtheorem{proposition}[theorem]{Proposition}
\theoremstyle{remark}
\newtheorem{remark}[theorem]{Remark}
\newtheorem{example}[theorem]{Example}
\numberwithin{equation}{section}
\newcommand{\bal}{\begin{align}}
\newcommand{\eal}{\end{align}}
\newcommand{\E}{\mathbb{E}}
\newcommand{\R}{\mathbb{R}}
\newcommand{\U}{\mathcal{U}}
\newcommand{\M}{\mathcal{M}}
\newcommand{\F}{\mathcal{F}}
\def \cP {\mathcal{ P}}
\def \bP {\mathbb{ P}}
\def\bF {\mathbb{ F}}
\def\cF {\mathcal{ F}}
\def \bM {\mathbb{ M}}
\def \fr {\forall}
\def \s {\sigma}
\def \le {\leq}
\def\as {\mbox{-a.s.}}
\def \ms {\medskip}
\def \bs {\bigskip}
\def \nd {\noindent}
\def \phi {\varphi}
\def \nn {\nonumber}
\def \lb {\label}
\def \D {\Delta}
\def \tx{\text}
\def \ed {\end{document}}
\def \G {\Gamma}
\def \ms{\medskip}
\def \bs{\bigskip}
\def \qd {\qquad}
\def \sdm {{\mathcal S}^{2,m}}
\def \hdm {{\mathcal H}^{2,m\times m}}
\newcommand{\Prob}{\mathbb{\Prob}}
\newcommand{\mytilde}{\raise.17ex\hbox{$\scriptstyle\mathtt{\sim}$}}
\begin{document}
\title[Mean-field backward-forward SDEs and  stochastic differential games]{Mean-field backward-forward stochastic differential equations and  nonzero sum stochastic differential games}

\author{Yinggu Chen, Boualem Djehiche and Said Hamad\`ene}

\address{Department of Mathematics \\ Shandong University\\ Jinan, Shandong Province \\ China}
\email{272564198@qq.com}
\address{Department of Mathematics \\ KTH Royal Institute of Technology \\ 100 44, Stockholm \\ Sweden}
\email{boualem@kth.se}
\address{Le Mans University, LMM \\ Avenue Olivier Messiaen \\ 72085 Le Mans, Cedex 9, France}
\email{ hamadene@univ-lemans.fr}
\thanks{The second author gratefully acknowledges financial support (grant 2016-04086) from the Swedish Research Council }

\date{\today}

\subjclass[2010]{60H10, 60H07, 49N90}

\keywords{mean-field, nonlinear diffusion process, backward SDEs, optimal control, nonzero-sum game,  open loop Nash equilibrium}

\begin{abstract} We study a general class of fully coupled backward-forward stochastic differential equations of mean-field type (MF-BFSDE). We derive existence and uniqueness results for such a system under weak monotonicity assumptions and without the non-degeneracy  condition on the forward equation. This is achieved by suggesting  an implicit approximation scheme that is shown to converge to the solution  of the system of MF-BFSDE. We apply these results to derive an explicit form of open-loop Nash equilibrium strategies for nonzero sum mean-field linear-quadratic stochastic differential games with random coefficients.  These strategies are valid for any time horizon of the game.
\end{abstract}

\maketitle

\tableofcontents
\section{Introduction}

We study the solvability of the following backward-forward stochastic differential equation of mean-field type (MF-BFSDE): for every $t\le T$, 
\begin{equation}\label{mf-bfsde-intro}
\left\{\begin{array}{lll}
X_t=x+\int_0^t f(s,X_s,Y_s,Z_s,\mathbb{P}_{(X_s, Y_s)})ds+\int_0^t \sigma (s,X_s,Y_s,Z_s,\mathbb{P}_{(X_s, Y_s)})dW_s,\\ \\
Y_t=g(X_T,\mathbb{P}_{X_T})-\int_t^Th(s,X_s,Y_s,Z_s,\mathbb{P}_{(X_s, Y_s)})ds-\int_t^T Z_sdW_s,
\end{array}
\right.
\end{equation}
where $W:=(W_t)_{t\le T}$ is a standard Brownian motion on $\R^m$ defined on a probability space $(\Omega,\F,\bP)$, $\bP_{(X_t, Y_t)}$ is the $t$-marginal distribution of $(X_t,Y_t)$ and $f, h, \sigma$ and $g$ are Lipschitz continuous functions with appropriate dimensions.
\medskip

\noindent This class of MF-BSDEs appears  in the analysis of optimal control problems (the stochastic maximum principle) and nonzero-sum games related to nonlinear stochastic dynamical systems of McKean-Vlasov type (see e.g. \cite{AD, BDL, BLM, carmona2013, carmona2015, DH}, the list of related papers being far longer).  It is an extension of the standard BFSDEs studied in several papers including \cite{antonelli, H98, hu-yong, hu-peng95, MaP, Min, Peng-wu, hu1995solution}.

Under Lipschitz continuity and monotonicity conditions on the coefficients we derive existence and uniqueness results  for the system \eqref{mf-bfsde-intro}. Compared with e.g. \cite{carmona2013}, we do not require non-degeneracy of the diffusion coefficient of the forward process. We further allow it to depend on $Z$. 

The monotonicity condition appears first in the paper by Hu and Peng \cite{hu-peng95} in order to remedy the assumption related to the length $T$ of the horizon $[0,T]$ when dealing with the existence and uniqueness of the solution to the standard backward-forward SDE (equation \eqref{mf-bfsde-intro} when the coefficients $f$, $g$, $h$ and $\sigma$ do not depend on $\nu$). See also \cite{antonelli} for more details. Subsequent papers on the solvability of standard BFSDEs  where the  monotonicity condition is substantially weakened include \cite{H98,Peng-wu}.

As mentioned above, when the data $f$, $g$, $h$ and $\sigma$  do not depend on $\nu$, the monotonicity condition is sufficient to obtain existence and uniqueness of a solution to the standard BFSDE. Therefore, an important issue is, beside the monotonicity condition, what kind of assumptions should be further imposed on the data (especially w.r.t. $\nu$) in order to obtain existence and uniqueness of a solution to the mean-field BFSDE \eqref{mf-bfsde-intro}. We show that if the Lipschitz constants of $f$, $g$, $h$ and $\sigma$ w.r.t. $\nu$ are small enough then \eqref{mf-bfsde-intro} has a unique solution.  When $\sigma$ does not depend on $\nu$,  we give a refinement of that result, under a relaxed monotonicity condition. This feature on $\sigma$ appears in the study of some linear-quadratic nonzero-sum differential games, which we consider in the second part of this paper. 

In the second part of the paper we deal with the linear-quadratic nonzero-sum differential game. The coefficients are stochastic processes and not necessarily deterministic. By using the stochastic maximum principle for optimal stochastic control problems obtained in \cite{AD}, we reduce the problem of existence of Nash equilibrium point (NEP for short) of the game to the solution of an associated MF-BFSDE of the type considered in the first part. Then, we provide conditions on the data of the game under which this latter MF-BFSDE has a unique solution and, consequently, the game has a NEP for any horizon $T$ whose explicit expression is also given. To the best our knowledge, this result seems new. Finally, there are some (rather few) other papers  on linear-quadratic nonzero-sum differential games including \cite{duncan-tembine,MP}, whose frameworks are however different from ours.  Actually, in \cite{duncan-tembine}, the main tool is the square completion technique, and in \cite{MP}, the method is based on the resolution of the associated Riccati equation. 

The paper is organized as follows. In Section 2, we formulate the problem and present our main results about existence and uniqueness of solutions to two classes of MF-BFSDEs under two different sets of monotonicity conditions, (H1) and (H$1^{\prime}$). In Section 3, we derive necessary and sufficient conditions for the existence of open loop Nash equilibrium strategies for an $n$-players nonzero sum mean-field linear-quadratic SDEs  with random coefficients. Moreover, we give an explicit form of those strategies. Finally, we give a counterexample to show that a Nash equilibrium may not exist when the monotonicity condition on the coefficients is not satisfied. 
    

\section{Mean-field Backward-forward stochastic differential equations}
Before we describe the framework defining the system of backward-forward SDEs, we introduce the Wasserstein distance between two probability measures. Denote by $\bM_2(\R^k)$ the set of probability measures on $\R^k$ with finite moments of order 2.  For $\mu_1,\mu_2\in\bM_2(\R^k)$, the 2-Wasserstein distance is defined by the formula
\begin{equation}\label{W-2}
d(\mu_1,\mu_2):=\inf\left\{\left(\int_{\R^k\times \R^k}|x-y|^2 F(dx,dy)\right)^{1/2};\,F(.,\R^k)=\mu_1,\,F(\R^k,.)=\mu_2\right\}
\end{equation}
i.e., the infimum is taken over $F\in\bM_2(\R^k\times \R^k)$ with marginals $\mu_1$ and $\mu_2$. It has also the following formulation in terms of a coupling between two square-integrable random variables $\xi$ and $\xi^{\prime}$ defined on the same probability space $(\Omega,\cF,\bP)$:
\begin{equation}\label{d-coupling}
d(\mu,\nu)=\inf\left\{\left(\E\left[|\xi-\xi^{\prime}|^2\right]\right)^{1/2},\,\,\text{law}(\xi)=\mu_1,\,\text{law}(\xi^{\prime})=\mu_2 \right\},
\end{equation}
from which is derived the following inequality involving the Wasserstein metric between the laws of the square integrable random variables $\xi, \bar\xi$ and their $L^2$-distance: 
\begin{equation}\label{dist}
d^2(\bP_{\xi},\bP_{\bar\xi},)  \le \E[|\xi-\bar\xi|^2],
\end{equation}
where $\bP_{\xi}:=\text{law}(\xi)$ and $\bP_{\xi^{\prime}}:=\text{law}(\xi^{\prime})$.

\medskip
Next let $(W_t)_{0\le t\le T}$  denote a standard $m$-dimensional Brownian motion, defined on the probability space $(\Omega,\cF,\bP)$, whose natural filtration is $(\cF^0_t)_{0\le t\le T}$, where $\cF^0_t=\sigma(W_s,\, s\le t)$ and we denote by 
$\mathbb{F}:=(\cF_t)_{0\le t\le T}$ its completion with the $\bP$-null sets of $\cF$. Let $\mathcal{P}$ be the $\sigma$-algebra of $\bF$-progressively measurable sets on $[0,T]\times \Omega$.  Set $\R^{m+m+m\times m}:=\R^m\times\R^m\times L(\R^m;\R^m)$ and let $\mathcal{M}^{2,k}$ denote  the space of $\mathcal{P}$-measurable and $\R^k$-valued processes which belong to $L^2([0,T]\times\Omega,dt\otimes d\bP)$. Next, we introduce the following spaces:

\begin{itemize}
\item[(i)] $\sdm$ is the space of continuous $\mathcal P$-measurable 
$\R^m$-valued processes $\zeta:=(\zeta_t)_{t\le T}$ such that \\ $\E[\sup_{t\le T}|\zeta_t|^2]<\infty$ ;

\item[(ii)] $\hdm$ is the space of $\mathcal P$-measurable 
$\R^{m\times m}$-valued processes $\theta:=(\theta_t)_{t\le T}$ such that \\$\E[\int_0^T|\theta_t|^2]<\infty$.
\end{itemize}

For $x, y\in \R^m, x\cdot y$ denotes the scalar product and for any $A, B\in L(\R^m,\R^d)$, $[A,B]=\sum_{j=1}^ d A^j.B^j$, $\, A^j, B^j$ being the $j$th columns of $A$ and $B$, respectively. Furthermore, for $u=(x,y,z)\in \R^{m+m+m\times m}$, we set $\|u\|^2:=|x|^2+|y|^2+\|z\|^2$, where $\|z\|^2=\mbox{trace}(zz^\top)$; ($^\top$) is the transpose operation.

\medskip
We make the following assumptions.
\begin{enumerate}
\item $g$ is a function defined on $\Omega\times\R^m\times \bM_2(\R^m)$ and valued in $\R^m$ such that, 
\begin{itemize}
\item[(a)] for any $(x,\mu)\in \R^m\times\bM_2(\R^m), \, g(x,\mu)$ is $\F_T^0$-measurable and square-integrable; \\
\item[(b)] $g$ is Lipschitz in $(x,\mu)$ uniformly in $\omega\in \Omega$, i.e. there exists positive constants $C_g^\nu$ and $C_g^x $ such that, for any $x,x^{\prime}\in \R^m$ and any $\nu, \nu^{\prime}\in \bM_2(\R^m)$,
\begin{align}\lb{lipschitzg}|g(x,\mu)-g(x',\mu^{\prime})|\le C_g^x \,|x-x^{\prime}|+ C_g^\nu d(\mu,\mu^{\prime}), \quad \bP\as
\end{align}
\end{itemize}

\item $f, h$ and $\sigma$ are functions defined on $[0,T]\times \Omega\times\R^{m+m+m\times m}\times \bM_2(\R^{m}\times\R^m)$, valued respectively in $\R^m,\R^m$ and $L(\R^m; \R^m)$ and satisfy
\begin{itemize}
\item[(a)] For any $\nu\in\bM_2(\R^{m}\times\R^m), u=(x,y,z)\in \R^{m+m+m\times m}$, the processes $(f(t,u,\nu))_{0\le t\le T}$, $(h(t,u,\nu))_{0\le t\le T}$ and $(\sigma(t,u,\nu))_{0\le t\le T}$ belong respectively to $\mathcal{M}^{2,m}, \mathcal{M}^{2,m}$ and $\mathcal{M}^{2,m\times m}$. \\

\item[(b)] $f,h$ and $\sigma$ are Lipschitz in $(x,y,z,\nu)$ uniformly in $(t,\omega)\in [0,T]\times \Omega$, i.e. for $\phi=f,b,\sigma$, there exist positive constants $C_\phi^{\nu} $ and $ C_\phi^u$ such that for any $t\in [0,T], u=(x,y,z), u^{\prime}=(x^{\prime},y^{\prime}, z^{\prime})\in \R^{m+m+m\times m}$, $\nu,\nu^{\prime}\in \bM_2(\R^{m}\times\R^m)$
\begin{equation}\label{lipschitz}\begin{array}{lll}
|\phi(t,u,\nu)-\phi(t,u^{\prime},\nu^{\prime})| \le C_\phi^u\, \|u-u^{\prime}\|+ C_\phi^{\nu}\, d(\nu,\nu^{\prime}),\quad \bP\as
\end{array}
\end{equation}

\end{itemize}

\end{enumerate}
Hereafter, we will use $C^u:=\max(C^u_f,C^u_h,C^u_{\sigma})$ and 
$C^{\nu}:=\max(C^{\nu}_f,C^{\nu}_h,C^{\nu}_{\sigma})$ as common Lipschitz constants of $f,h$ and $\sigma$ w.r.t. $u$ and $\nu$, respectively.  

\medskip 
A solution to the backward-forward stochastic differential equation associated with $(f,\sigma,h,g)$ is a triple of processes $(X,Y,Z):=(X_t,Y_t,Z_t)_{t\le T}$ which is $\R^{m+m+m\times m}$-valued such that 
\begin{equation}\label{mf-bfsde}
\left\{\begin{array}{l}
X,Y\in \sdm \,, \, Z\in \hdm\,;\\\\
X_t=x+\int_0^t f(s,X_s,Y_s,Z_s,\mathbb{P}_{(X_s, Y_s)})ds+\int_0^t \sigma (s,X_s,Y_s,Z_s,\mathbb{P}_{(X_s, Y_s)})dW_s,\,\,\,t\le T ;\\ \\
Y_t=g(X_T,\mathbb{P}_{X_T})-\int_t^Th(s,X_s,Y_s,Z_s,\mathbb{P}_{(X_s, Y_s)})ds-\int_t^T Z_sdW_s,\,\,\, t\le T.
\end{array}
\right.
\end{equation}

Next, for $t\in [0,T], \nu\in\bM_2(\R^{m}\times\R^m), u=(x,y,z)$ and $u^{\prime}=(x^{\prime},y^{\prime},z^{\prime})$ in $\R^{m+m+m\times m}$, we define the function $\mathcal{A}$ by

\begin{equation}\lb{operateur}
\begin{array}{lll}
\mathcal{A}(t,u,u^{\prime},\nu):=(f(s,x,y,z,\nu)-f(s,x^{\prime},y^{\prime},z^{\prime},\nu))\cdot (y-y^{\prime}) \\ 
\qquad\qquad\qquad\quad+ (h(s,x,y,z,\nu)-h(s,x^{\prime},y^{\prime},z^{\prime},\nu))\cdot(x-x^{\prime})\\
\qquad\qquad\qquad\quad +[\sigma(s,x,y,z,\nu)-\sigma(s,x^{\prime},y^{\prime},z^{\prime},\nu),z-z^{\prime}].
\end{array}
\end{equation}

\medskip
We consider the following assumption. 
\begin{eqnarray*}
(H1)\left\{ \begin{array}{lll}
\text{(i) there exists $k >0$, s.t. for all } t\in[0,T], \nu\in\bM_2(\R^{m}\times\R^m), u,u^{\prime} \in \R^{m+m+m\times m}, \\
\quad \mathcal{A}(t,u,u^{\prime},\nu)\le -k(|x-x^{\prime}|^2+|y-y^{\prime}|^2+|z-z^{\prime}|^2),\quad \bP\as \\  \\
\text{(ii) there exists $k^{\prime} >0$, s.t. for all } \nu\in\bM_2(\R^{m}\times\R^m), x,x^{\prime}\in\R^m, \\
(g(x,\nu)-g(x^{\prime},\nu))\cdot(x-x^{\prime})\ge k^{\prime}|x-x^{\prime}|^2, \quad \bP\as  \\ 
\end{array}
\right.
\end{eqnarray*}
\begin{remark}In the case when $\mathcal A$ and $g$ do not depend on $\nu$, Assumption (H1) appear first in a paper by Hu-Peng \cite{hu-peng95} to study the existence and uniqueness of the solution of the backward-forward SDE \eqref{mf-bfsde-intro} in the framework where the coefficients do not depend on $\nu$. This assumption is then weakened in several papers including  
\cite{H98}, \cite{Peng-wu}.
\end{remark}

In the next section we prove existence and uniqueness of the solution of system \eqref{mf-bfsde} of backward-forward SDEs under the assumptions (H1).

\subsection{Existence and uniqueness results under (H1)}
\begin{theorem}[Existence and Uniqueness of a solution]\label{T123} 
Let assumption (H1) hold. If the constant $C_g^\nu, {C^\nu}$ satisfy the inequality \begin{equation}\label{coef condi}
C_g^\nu,C^{\nu}  < min \{(\sqrt{3}-1)k^{\prime}, \frac{\sqrt{3}}{3}k \}
\end{equation}
then there exists a unique process $U=(X,Y,Z)$ which solves the system \eqref{mf-bfsde} of Backward-Forward SDE of mean-field type.
\end{theorem}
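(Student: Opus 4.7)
\emph{Step 1 (the implicit iteration).} The plan is to construct the solution of \eqref{mf-bfsde} as the limit of an implicit iteration in which, at step $n+1$, the mean-field arguments are frozen to the law of the preceding iterate. Initialize $(X^0,Y^0,Z^0)\in\sdm\times\sdm\times\hdm$ arbitrarily, and given $(X^n,Y^n,Z^n)$ define $(X^{n+1},Y^{n+1},Z^{n+1})$ as the unique solution of the standard BFSDE obtained from \eqref{mf-bfsde} by substituting $\bP_{(X^n_s,Y^n_s)}$ and $\bP_{X^n_T}$ for $\bP_{(X_s,Y_s)}$ and $\bP_{X_T}$. Because the measure arguments are now exogenous, this is a classical (non-mean-field) BFSDE whose coefficients inherit the Lipschitz continuity and the monotonicity in (H1) from the original data; well-posedness of each step is therefore the Hu--Peng theorem \cite{hu-peng95}.

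\emph{Step 2 (the contraction estimate).} Set $\Delta^n\xi:=\xi^{n+1}-\xi^n$ for $\xi\in\{X,Y,Z,U\}$ with $U^n:=(X^n,Y^n,Z^n)$. Apply It\^o's formula to $\Delta^n X_t\cdot\Delta^n Y_t$, take expectation and use $\Delta^n X_0=0$:
\begin{equation*}
\E[\Delta^n X_T\cdot\Delta^n Y_T] = \E\int_0^T\bigl(\Delta^n Y_s\cdot\Delta^n f_s + \Delta^n X_s\cdot\Delta^n h_s + [\Delta^n \sigma_s,\Delta^n Z_s]\bigr)\,ds.
\end{equation*}
Split each of $\Delta^n f,\Delta^n h,\Delta^n\sigma$ into a ``state-part'' (the difference at the same frozen measure $\bP^n_s:=\bP_{(X^n_s,Y^n_s)}$) plus a ``measure-part'' (the difference at the same $U^n_s$). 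By (H1)(i) the state-parts assemble into $\mathcal A(s,U^{n+1}_s,U^n_s,\bP^n_s)\le -k\|\Delta^n U_s\|^2$, while the measure-parts are bounded by
\begin{equation*}
C^{\nu} d(\bP^n_s,\bP^{n-1}_s)\bigl(|\Delta^n X_s|+|\Delta^n Y_s|+\|\Delta^n Z_s\|\bigr)\le \sqrt{3}\,C^{\nu} d(\bP^n_s,\bP^{n-1}_s)\|\Delta^n U_s\|.
\end{equation*}
On the terminal side, (H1)(ii) and \eqref{lipschitzg} yield $\Delta^n X_T\cdot\Delta^n Y_T \ge k'|\Delta^n X_T|^2 - C_g^{\nu} d(\bP_{X^n_T},\bP_{X^{n-1}_T})|\Delta^n X_T|$. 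Using \eqref{dist} to replace Wasserstein distances of laws by $L^2$-distances of the processes, and applying Young's inequality with parameters tuned to \eqref{coef condi}, one arrives at
\begin{equation*}
\alpha\,\E|\Delta^n X_T|^2 + \beta\int_0^T\E\|\Delta^n U_s\|^2\,ds \le \lambda\Bigl(\alpha\,\E|\Delta^{n-1}X_T|^2 + \beta\int_0^T\E\|\Delta^{n-1}U_s\|^2\,ds\Bigr)
\end{equation*}
for some $\alpha,\beta>0$ and $\lambda\in(0,1)$.

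\emph{Step 3 (convergence and uniqueness).} The resulting Cauchy property, together with a Burkholder--Davis--Gundy argument applied to the forward stochastic integrals and to the martingale representation of $Y^{n+1}-Y^n$, upgrades the $L^2(dt\otimes d\bP)$-estimate to convergence of $(X^n,Y^n,Z^n)$ in $\sdm\times\sdm\times\hdm$. Passing to the limit---legitimate by the Lipschitz continuity of $f,h,\sigma,g$ and \eqref{dist}---identifies the limit as a solution of \eqref{mf-bfsde}. Uniqueness is then obtained by running the same It\^o computation on the difference of two candidate solutions: \eqref{coef condi} forces that difference to vanish.

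\emph{Main obstacle.} The technical heart of the argument is the joint calibration of the Young-inequality parameters. The factor $\sqrt{3}$ enters through $|\Delta X|+|\Delta Y|+\|\Delta Z\|\le\sqrt{3}\|\Delta U\|$, and the thresholds $(\sqrt{3}-1)k'$ and $\sqrt{3}\,k/3$ are the sharp bounds under which a single pair $(\alpha,\beta)$ delivers a contraction constant $\lambda<1$ \emph{simultaneously} for the terminal piece $\E|\Delta X_T|^2$ and the running piece $\int_0^T\E\|\Delta U_s\|^2\,ds$. Handling at once the $X$-marginal Wasserstein on the terminal side and the joint $(X,Y)$-Wasserstein on the running side, while keeping a common contraction rate, is the most delicate aspect of the proof.
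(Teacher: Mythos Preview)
Your proposal is correct and follows the same essential route as the paper: an implicit iteration with frozen mean-field laws, the It\^o duality $\E[\Delta X_T\cdot\Delta Y_T]$, the monotonicity bound from (H1), and a Young's-inequality calibration that produces the thresholds $(\sqrt{3}-1)k'$ and $\sqrt{3}\,k/3$. The one notable difference is that the paper augments the forward equation in its iteration by penalization terms $-\delta(Y^{n+1}-Y^n)$ in the drift and $-\delta(Z^{n+1}-Z^n)$ in the diffusion; these contribute an extra $\delta\int_0^T(|\hat Y^{n+1}|^2+\|\hat Z^{n+1}\|^2)\,ds$ on the left and a matching $\frac{\delta}{2\rho}\int_0^T(|\hat Y^{n}|^2+\|\hat Z^{n}\|^2)\,ds$ on the right, making the contraction literally symmetric in all three components of $U$. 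Your plain frozen-law iteration omits this device, and that is legitimate: the paper itself sends $\delta\to 0$ when extracting the sufficient condition, and without $\delta$ the right-hand side of the estimate contains only $|\Delta^{n-1}X|^2+|\Delta^{n-1}Y|^2$ (the Wasserstein term involves the law of $(X,Y)$ alone), so one still obtains geometric decay of $\E|\Delta X_T|^2+\int_0^T\E(|\Delta X|^2+|\Delta Y|^2)$ and then recovers $\int_0^T\E\|\Delta Z\|^2\to 0$ a posteriori. In short, your version is a mild simplification of the paper's scheme with the same mechanism and the same constants.
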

\noindent \underline{Proof}. 
\noindent (i) \underline{Existence of a solution}: Let $\delta>0$ and consider the sequence $U^{n}=(X^n,Y^n,Z^n)_{n\ge 0}$ of processes defined recursively as follows: 
$(X^0,Y^0,Z^0)=(0,0,0)$ and, for $n \ge 0, U^{n+1}$ satisfies,  for every $0\le t\leq T$, 
\begin{equation}\label{T1-n}\left\{
\begin{array}{lll}
 U^{n+1}=(X^{n+1},Y^{n+1},Z^{n+1})\in \sdm\times \sdm\times \hdm \,\,; \\\\
X^{n+1}_t=x+\int_0^t \left\{f(s,U^{n+1}_s,\nu^{n}_s)-\delta (Y^{n+1}_s-Y^n_s)\right\}ds \\ \\ \qquad\qquad+\int_0^t \left\{\sigma (s,U^{n+1}_s,\nu^n_s)-\delta(Z^{n+1}_s-Z^n_s)\right\}dW_s,\\ \\
Y^{n+1}_t=g(X^{n+1}_T,\mu^n_T)-\int_t^Th(s,U^{n+1}_s,\nu^n_s)ds-\int_t^T Z^{n+1}_sdW_s,
\end{array}\right.
\end{equation}
where $\nu^n_t:=\mathbb{P}_{(X^n_t, Y^n_t)}$ and $\mu^n_T:=\mathbb{P}_{X^n_T}$. By Theorem 1.2 in \cite{H98} (see also \cite{hu1995solution}, pp.282 or \cite{Peng-wu}, pp.833), the system \eqref{T1-n} admits a unique solution. First we will show that $(U^n)_{n\ge 0}$ is a Cauchy sequence in $\mathcal{M}^{2,m+m+m\times m}$ and $(X^n_T)_{n\ge 0}$ is a Cauchy sequence in $L^2(d\bP)$.
For $n\ge 1,\ t\in[0,T]$, set 
\begin{equation}\lb{num1}
\hat{X}^{n+1}_t:=X^{n+1}_t-X^{n}_t,\,\,\, \hat{Y}^{n+1}_t:=Y^{n+1}_t-Y^{n}_t,\,\,\, \hat{Z}^{n+1}_t:=Z^{n+1}_t-Z^{n}_t
\end{equation}
and for $\varphi=f,h, \sigma$, 
\begin{equation}\lb{num2}\begin{array}{lll}
\widehat{\varphi}^{n+1}(t):=\varphi(t,U^{n+1}_t,\nu^{n}_t)-\varphi(t,U^{n}_t,\nu^{n-1}_t),\\ \overline{\varphi}^{n}(t):=\varphi(t,U^{n}_t,\nu^{n}_t)-\varphi(t,U^{n}_t,\nu^{n-1}_t).
\end{array}
\end{equation}
Applying It\^o's formula, we obtain
\begin{equation}\label{T1-duality-1}\begin{array}{lll}
\hat{X}^{n+1}_T\cdot\hat{Y}^{n+1}_T-\hat{X}^{n+1}_0\cdot\hat{Y}^{n+1}_0=\int_0^T \hat{Y}^{n+1}_s\cdot \{\hat{f}^{n+1}(s)-\delta(\hat{Y}^{n+1}_s-\hat{Y}^{n}_s)\}ds \\ \\ \qquad\qquad\qquad\qquad\qquad\qquad
+\int_0^T \hat{Y}^{n+1}_s\cdot \{\hat{\sigma}^{n+1}(s)-\delta(\hat{Z}^{n+1}_s-\hat{Z}^{n}_s)\}dW_s \\ \\ \qquad\qquad\qquad\qquad\qquad\qquad 
+\int_0^T \hat{X}^{n+1}_s\cdot \hat{h}^{n+1}(s)ds+ \int_0^T \hat{X}^{n+1}_s\cdot \hat{Z}^{n+1}_sdW_s   \\ \\ \qquad\qquad\qquad\qquad \qquad\qquad
+\int_0^T [\hat{\sigma}^{n+1}(s)-\delta(\hat{Z}^{n+1}_s-\hat{Z}^{n}_s), \hat{Z}^{n+1}_s]ds.
\end{array}
\end{equation}
Furthermore, using standard estimates of BSDEs and the Burkholder-Davis-Gundy inequality, it is easy to see that the stochastic integrals in \eqref{T1-duality-1} are true martingales. We may take expectation to obtain
\begin{eqnarray}\label{T1-duality-2}\begin{array}{lcl}
\E[\hat{X}^{n+1}_T\cdot (g(X^{n+1}_T,\mu_T^{n})- g(X^{n}_T,\mu_T^{n-1}))] +
\delta \E\left[\int_0^T \left( |\hat{Y}^{n+1}_s|^2+\| \hat{Z}^{n+1}_s\|^2\right)ds \right] \\\\ \qquad = \delta \E\left[\int_0^T\left(\hat{Y}^{n+1}_s\cdot \hat{Y}^{n}_s+[\hat{Z}^{n+1}_s,\hat{Z}^{n}_s]\right)ds\right]\\\\
\qquad \qquad +\E\left[\int_0^T\left( \hat{X}^{n+1}_s \cdot \hat{h}^{n+1}(s)+\hat{Y}^{n}_s\cdot\hat{f}^{n+1}(s)+[\hat{\sigma}^{n+1}(s),\hat{Z}^{n+1}_s]\right)ds\right].
 \end{array}
\end{eqnarray}
 \noindent Using the Lipschitz continuity of $g$,  Young's inequality, \eqref{dist} and (H1(ii)), we have, for any $\varepsilon>0$,
\begin{align}\label{eqintermed1}
&\E[\hat{X}^{n+1}_T\cdot (g(X^{n+1}_T,\mu_T^{n})- g(X^{n}_T,\mu_T^{n-1}))] \nn \\&=\E[\hat{X}^{n+1}_T\cdot (g(X^{n+1}_T,\mu_T^{n})- g(X^{n}_T,\mu_T^{n}))] +\E[\hat{X}^{n+1}_T\cdot (g(X^{n}_T,\mu_T^{n})- g(X^{n}_T,\mu_T^{n-1}))] \nn\\\nn\\ &
\ge k^{\prime}\E[|\hat{X}^{n+1}_T|^2]-C_g^\nu\E[|\hat{X}^{n+1}_T|]d(\mu_T^{n},\mu_T^{n-1}) \nn\\ \nn\\ & 
 \ge  k^{\prime}\E[|\hat{X}^{n+1}_T|^2]-\frac{C_g^\nu\varepsilon}{2} \E[|\hat{X}^{n+1}_T|^2]-\frac{C_g^\nu}{2\varepsilon}d^2(\mu_T^{n},\mu_T^{n-1}) \nn \\\nn \\ & \ge (k^{\prime}-\frac{C_g^\nu\varepsilon}{2})\E[|\hat{X}^{n+1}_T|^2]-\frac{C_g^\nu}{2\varepsilon}\E[|\hat{X}^{n}_T|^2].
\end{align}
Again, by the Lipschitz continuity of $f, h,\sigma$,  Young's inequality, \eqref{dist} and (H1(i)), we also have, for every $0\le  t\le T$ and any $\alpha>0$,
$$
 \begin{array}{lll}
&\hat{X}^{n+1}_t \cdot \hat{h}^{n+1}(t)+\hat{Y}^{n+1}_t\cdot \hat{f}^{n+1}(t)+[\hat{\sigma}^{n+1}(t),\hat{Z}^{n+1}_t]\\ \\&=\mathcal{A}(t,U_t^{n+1},U_t^{n},\nu^{n}_t)+\hat{X}^{n+1}_t \bar{h}^{n}(t)+\hat{Y}^{n+1}_t\cdot\bar{f}^{n}(t)+[\bar{\sigma}^{n}(t),\hat{Z}^{n+1}_t]\\  \\
&\le -k\left\{|\hat{X}^{n+1}_t|^2+|\hat{Y}^{n+1}_t|^2+\|\hat{Z}^{n+1}_t\|^2\right\}+|\hat{X}^{n+1}_t|| \bar{h}^{n}(t)|+|\hat{Y}^{n+1}_t||\bar{f}^{n}(t)|+|\bar{\sigma}^{n}(t)||\hat{Z}^{n+1}_t|\\ \\ &
\le -k\left\{|\hat{X}^{n+1}_t|^2+|\hat{Y}^{n+1}_t|^2+\|\hat{Z}^{n+1}_t\|^2\right\}+C^{\nu} d(\nu_t^{n},\nu_t^{n-1})\left(|\hat{X}^{n+1}_t|+|\hat{Y}^{n+1}_t|+|\hat{Z}^{n+1}_t|\right)\\ \\ &
\le -k\left\{|\hat{X}^{n+1}_t|^2+|\hat{Y}^{n+1}_t|^2+\|\hat{Z}^{n+1}_t\|^2\right\}+\frac{C^{\nu}}{2\alpha}\left\{|\hat{X}^{n+1}_t |^2+|\hat{Y}^{n+1}_t |^2+\|\hat{Z}^{n+1}_t \|^2\right\} +\frac{3\alpha C^{\nu}}{2} d^2(\nu_t^{n},\nu_t^{n-1}).\\ 
\end{array}
$$
Now since $d^2(\nu_t^{n},\nu_t^{n-1})\le \E[|\hat{X}^{n}_t|^2+|\hat{Y}^{n}_t|^2]$,  we have
 \begin{equation}\label{eqintermed2}
\begin{array}{lll}
\E\left[\int_0^T\left( \hat{X}^{n+1}_s \cdot\hat{h}^{n+1}(s)+\hat{Y}^{n+1}_s\cdot \hat{f}^{n+1}(s)+[\hat{\sigma}^{n+1}(s),\hat{Z}^{n+1}_s]\right)ds\right] \\ \\ \qquad \le 
\E\left[\int_0^T \left( (\frac{ C^{\nu}}{2\alpha}-k)\left\{|\hat{X}^{n+1}_t |^2+|\hat{Y}^{n+1}_t |^2+\|\hat{Z}^{n+1}_t \|^2\right\}+\frac{3\alpha C^{\nu}}{2}\E[|\hat{X}^{n}_t|^2+|\hat{Y}^{n}_t|^2]\right) ds\right].
\end{array}
\end{equation}
On the other hand, in view of Young's inequality, we also have, for any $\rho>0$, 
 \begin{equation}\label{eqintermed3}
\begin{array}{lll}
\E\left[\int_0^T\left(\hat{Y}^{n+1}_s\cdot \hat{Y}^{n}_s+[\hat{Z}^{n+1}_s,\hat{Z}^{n}_s]\right)ds\right]\le 
\frac{1}{2}\E\left[\int_0^T\left(\rho |\hat{Y}^{n+1}_s|^2  +\rho\|\hat{Z}^{n+1}_s\|^2 \right.\right.\\ \left.\left.  \qquad \qquad \qquad\qquad \qquad \qquad \qquad \qquad \qquad \qquad \qquad \qquad\qquad +\frac{1}{\rho}|\hat{Y}^{n}_s|^2+\frac{1}{\rho}\|\hat{Z}^{n}_s\|^2\right)ds\right].
\end{array}
 \end{equation}
Applying now \eqref{eqintermed1}, \eqref{eqintermed2} and \eqref{eqintermed3} to \eqref{T1-duality-2}, yields
\begin{equation*}\begin{array}{lll}
(k^{\prime}-\frac{C_g^\nu\varepsilon}{2})\E[|\hat{X}^{n+1}_T|^2]-\frac{C_g^\nu}{2\varepsilon}\E[|\hat{X}^{n}_T|^2] +
\delta \E\left[\int_0^T \left( |\hat{Y}^{n+1}_s|^2+\| \hat{Z}^{n+1}_s\|^2\right)ds \right]\\ \\ 
- \E\left[\int_0^T (\frac{ C^{\nu}}{2\alpha}-k)\left\{|\hat{X}^{n+1}_t |^2+|\hat{Y}^{n+1}_t |^2+\|\hat{Z}^{n+1}_t \|^2\right\}dt\right]
\\\\ \le \delta \E\left[\int_0^T\left(\frac{\rho}{2} |\hat{Y}^{n+1}_s|^2 +\frac{\rho}{2} \|\hat{Z}^{n+1}_s\|^2+\frac{1}{2\rho}|\hat{Y}^{n}_s|^2+\frac{1}{2\rho}\|\hat{Z}^{n}_s\|^2\right)ds\right]
+\frac{3\alpha C^{\nu}}{2}\E[\int_0^T\left\{|\hat{X}^{n}_t|^2+|\hat{Y}^{n}_t|^2\right\}dt].
\end{array}
\end{equation*}
Rearranging terms, we obtain
\begin{equation*}\begin{array}{lll}
&(k^{\prime}-\frac{C_g^\nu\varepsilon}{2})\E[|\hat{X}^{n+1}_T|^2]+
\E\left[\int_0^T(k-\frac{C^{\nu}}{2\alpha})|\hat{X}^{n+1}_t|^2ds\right] +\\ \\& \qquad\qquad\qquad 
\E\left[\int_0^T (\delta (1-\frac{\rho}{2} )+k-\frac{ C^{\nu}}{2\alpha})\left(|\hat{Y}^{n+1}_s|^2+\| \hat{Z}^{n+1}_s\|^2\right)ds \right] \\\\ & \le \frac{C_g^\nu}{2\varepsilon} \,\E[|\hat{X}^{n}_T|^2] +\E\left[\int_0^T\left(\frac{3\alpha C^{\nu}}{2}|\hat {X}^{n}_s|^2+(\frac{\delta}{2\rho}+\frac{3\alpha C^{\nu}}{2})|\hat{Y}^{n}_s|^2+\frac{\delta}{2\rho}\|\hat{Z}^{n}_s\|^2\right)ds\right].
\end{array}
\end{equation*}
By setting 
$$\begin{array}{l}
\lambda(\epsilon, \delta, \alpha,\rho):=\min\{k^{\prime}-\frac{C_g^\nu \varepsilon}{2}, k-\frac{C^{\nu}}{2\alpha},\delta (1-\frac{\rho}{2} )+k-\frac{C^{\nu}}{2\alpha}\}, \\
\theta(\epsilon, \delta, \alpha,\rho):=\max\{\frac{C_g^\nu}{2\varepsilon}, \frac{\delta}{2\rho}+\frac{3\alpha C^{\nu}}{2}\},
\end{array}$$
we obtain
\begin{equation}\label{T1-contraction}
\begin{array}{lll}
\E[|\hat{X}^{n+1}_T|^2]+\E\left[\int_0^T \| \hat{U}^{n+1}_s\|^2ds\right]\le \frac{\theta}{\lambda}\left(\E[|\hat{X}^{n}_T|^2]+\E\left[\int_0^T \|\hat{U}^{n}_s\|^2ds\right] \right).
\end{array}
\end{equation}
Now, if there exist $\alpha$, $\varepsilon$, $\delta$ and $\rho$ so that \begin{equation}\label{thetalambda}\lambda(\epsilon, \delta, \alpha,\rho)>\theta(\epsilon, \delta, \alpha,\rho)\end{equation}then the inequality \eqref{T1-contraction} becomes a contraction, which implies that $(X_T^n)_{n\ge 0}$ is a Cauchy sequence in $L^2(\Omega,\bP)$ and $(X^n)_{n\ge 0},(Y^n)_{n\ge 0}$ and $(Z^n)_{n\ge 0}$ are Cauchy sequences in  $L^2([0,T]\times\Omega, dt\otimes d\bP)$. Therefore going back to \eqref{T1-n}, using It\^o's formula and, by now standard calculations, we obtain 
$$\E[\sup_{s\le T}(|X^n_s-X^m_s|^2+|Y^n_s-Y^m_s|^2)]\rightarrow 0\mbox{ as }n,m\rightarrow\infty.$$
Consequently, there exist $\mathbb{F}$-adapted continuous processes $X$ and $Y$ and an $\mathbb{F}$-progressively measurable process $Z$ such that
$$\E[\sup_{s\le T}(|X^n_s-X_s|^2+|Y^n_s-Y_s|^2)+\int_0^T\|Z^n_s-Z_s\|^2ds]\rightarrow 0\,\,\, \mbox{ as }\,n\rightarrow\infty.$$
Moreover ,
$$\E[\sup_{s\le T}(|X_s|^2+|Y_s|^2)+\int_0^T\|Z_s\|^2ds]<\infty .$$
Finally, taking the limits in equation \eqref{T1-n} we obtain that $(X,Y,Z)$ is a solution of MF-BFSDE \eqref{mf-bfsde}. 

\medskip
Next, we are going to show that such  $\alpha$, $\varepsilon$, $\delta$ and $\rho$ exist when the condition \eqref{coef condi} is satisfied.
In fact, to make the contraction meaningful, we assume $k^{\prime}-\frac{C_g^\nu \varepsilon}{2}$, $k-\frac{C^{\nu}}{2\alpha}$ and $1-\frac{\rho}{2}$ are positive. It is easily shown that $(1-\frac{\rho}{2})\le\frac{1}{2\rho}$ and the terms of this inequality are equal if and only if $\rho=1$. So let us take $\rho=1$ and  set 
\begin{align*}&\theta^*(\epsilon,\alpha)=\lim_{\delta \rightarrow 0}\theta(\epsilon, \delta, \alpha,1)
=\max\{\frac{C_g^\nu}{2\varepsilon}, \frac{3\alpha C^{\nu}}{2}\}\\
&\mbox{and }
\lambda^*(\epsilon,\alpha)=\lim_{\delta \rightarrow 0}\lambda(\epsilon, \delta, \alpha,1)=\min\{k^{\prime}-\frac{C_g^\nu \varepsilon}{2}, k-\frac{C^{\nu}}{2\alpha}\}
.\end{align*}
Now if, for some $\epsilon$, $\alpha$, we have $\lambda^*(\epsilon,\alpha)>\theta^*(\epsilon,\alpha)$, then there exists $\delta$ small enough such \eqref{thetalambda} is satisfied with those $\epsilon$, $\alpha$, $\delta$ and $\rho=1$. Finally, in order to have $\lambda^*(\epsilon,\alpha)>\theta^*(\epsilon,\alpha)$, it is equal to have the following inequalities: 
\begin{equation}\label{contraction coeffcient 1}
\left\{\begin{aligned}
k^{\prime}-\frac{C_g^\nu \varepsilon}{2}& >\frac{C_g^\nu}{2\varepsilon} \\ k-\frac{C^{\nu}}{2\alpha}& >\frac{C_g^\nu}{2\varepsilon}  \\ k-\frac{C^{\nu}}{2\alpha}& >\frac{3\alpha C^{\nu}}{2}
\\ k^{\prime}-\frac{C_g^\nu \varepsilon}{2}& >\frac{3\alpha C^{\nu}}{2}. 
\end{aligned}\right.
\end{equation}
For these inequalities, noticing that $\frac{C_g^\nu \varepsilon}{2}+\frac{C_g^\nu}{2\varepsilon}$, $\frac{C^{\nu}}{2\alpha}+\frac{3\alpha C^{\nu}}{2} $ reach their minimum when $ \varepsilon = 1$ and $\alpha= \frac{\sqrt{3}}{3}$, respectively. To give a sufficient condition on $C_g^\nu,C^{\nu}$, we choose $\alpha= \frac{\sqrt{3}}{3}$, $ \varepsilon = 1$, and set $\gamma_1,\gamma_2 > 0$ to be the coefficients satisfying $C_g^\nu,C^{\nu} <  min\{\gamma_1 k, \gamma_2 k^{\prime}\}$.

If $\gamma_1 k \leq \gamma_2 k^{\prime}$, then, \eqref{contraction coeffcient 1} holds if the following system of inequalities hold.
\begin{equation}\label{coef 2}
\left\{
\begin{aligned}
&k^{\prime} > C_g^\nu\\
&k >\sqrt{3} C^{\nu} \\
&k-\frac{\sqrt{3}\gamma_1 k }{2} >\frac{\gamma_1 k }{2} \\
&k^{\prime}-\frac{\gamma_1 k  }{2} >\frac{\sqrt{3}\gamma_1 k }{2}.
\end{aligned}\right.
\end{equation}
From the third inequality, we obtain $\gamma_1 < \sqrt{3} - 1$. For the forth inequality in \eqref{coef 2}, it is enough to show 
\begin{equation*}
\begin{aligned}
\frac{\gamma_1 k}{\gamma_2}-\frac{\gamma_1 k  }{2}>\frac{\sqrt{3}\gamma_1 k }{2},
\end{aligned}
\end{equation*}
which means $\gamma_2 <\sqrt{3} - 1$. It is easily checked that we obtain the same result under the other condition $\gamma_1 k > \gamma_2 k^{\prime}$. Finally, compared with $C^{\nu} <\frac{\sqrt{3}}{3} k$, we obtain the sufficient condition
$C_g^\nu,C^{\nu}  < min \{(\sqrt{3}-1)k^{\prime}, \frac{\sqrt{3}}{3}k \}$ for which 
$\gamma(\epsilon, \delta, \alpha, \rho)<\theta(\epsilon, \delta, \alpha, \rho)$ when $\rho=\varepsilon=1$, $\alpha=\frac{\sqrt{3}}{3}$ and $\delta >0$ is small enough. 
\qed
\bigskip
 
\noindent (ii) \underline{Uniqueness of the solution}: 
Let $U^{\prime}=(X^{\prime},Y^{\prime}, Z^{\prime})$ be another solution to \eqref{mf-bfsde}. Set
\begin{equation}\label{Gamma-T}        
\begin{array}{lll}
\Gamma_T:=  \E\left[\int_0^T\left\{  (f(s,U^{\prime}_s,\nu^{\prime}_s)-f(s,U_s,\nu_s))\cdot(Y^{\prime}_s-Y_s) \right. \right. \\  \\ \qquad \qquad \qquad  \left. \left.
  +(h(s,U^{\prime}_s,\nu^{\prime}_s)-h(s,U_s,\nu_s))\cdot(X^{\prime}_s-X_s) \right. \right. \\  \\ \qquad \qquad \qquad \qquad  \left. \left.
+[\sigma(s,U^{\prime}_s,\nu^{\prime}_s)-\sigma(s,U_s,\nu_s), Z^{\prime}_s-Z_s] \right\}ds\right].
\end{array}
\end{equation}
Applying It\^o's formula to the product $(X^{\prime}_T-X_T)\cdot(Y^{\prime}_T-Y_T)$ and taking expectation, we obtain
\begin{equation}\label{uni-duality-1}
\E[(X^{\prime}_T-X_T)\cdot(Y^{\prime}_T-Y_T)] = \Gamma_T. 
\end{equation}
In view of (H1), \eqref{dist} and the Lipschitz continuity of $f, h, \sigma$ and $g$, and the Cauchy-Schwarz inequality we have
$$\begin{array}{lll}
\Gamma_T=\E[(X^{\prime}_T-X_T)\cdot(Y^{\prime}_T-Y_T)]\ge k^{\prime}\E[|X^{\prime}_T-X_T|^2]-C_g^\nu\E[|X^{\prime}_T-X_T|]d(\mu^{\prime}_T,\mu_T)\\ \\ \qquad\qquad\qquad\qquad\qquad \ge  k^{\prime} \E[|X^{\prime}_T-X_T|^2]-C_g^\nu\E[|X^{\prime}_T-X_T|]\E[|X^{\prime}_T-X_T|^2]^{\frac{1}{2}} 
\\ \\ \qquad\qquad\qquad\qquad\qquad \ge  (k^{\prime} -C_g^\nu)\E[|X^{\prime}_T-X_T|^2].
\end{array}
$$
Therefore,  
\begin{equation}\label{uni-duality-2}
\Gamma_T\ge (k^{\prime} -C_g^\nu)\E[|X^{\prime}_T-X_T|^2].
\end{equation}
On the other hand, we have
$$
\begin{array}{lll}
\Gamma_T\le \E\left[\int_0^T\left\{ \mathcal{A}(s,U_s,U^{\prime}_s,\nu_s)+(C^{\nu}|X^{\prime}_s-X_s|+C^{\nu}|Y^{\prime}_s-Y_s|+C^{\nu}\|Z^{\prime}_s-Z_s\|)d(\nu_s,\nu^{\prime}_s)\right\}ds\right].
\end{array}
$$
But 
$$
d(\nu_s,\nu^{\prime}_s)\le \sqrt{\E[|X^{\prime}_s-X_s|^2+|Y^{\prime}_s-Y_s|^2]}.
$$
Therefore, by the use of Young's inequality three times we obtain
\begin{align}\label{majorationgamma}
\Gamma_T\le &\E[\int_0^T\left\{ -k(|X^{\prime}_s - X_s|^2+|Y^{\prime}_s-Y_s|^2+\|Z^{\prime}_s-Z_s\|^2)+\frac{C^{\nu}}{2}(3\alpha+\frac{1}{\alpha})|X^{\prime}_s-X_s|^2 +
\right.\nn\\&\left.\qquad \qquad\frac{C^{\nu}}{2}(3\alpha+\frac{1}{\alpha})|Y^{\prime}_s-Y_s|^2 +
\frac{C^{\nu}}{2\alpha}\|Z^{\prime}_s-Z_s\|^2 \right\} ds ].
\end{align}
Now, combine \eqref{uni-duality-2} and \eqref{majorationgamma} to obtain
\begin{align*}
&(k^{\prime} -C_g^\nu)\E[|X^{\prime}_T-X_T|^2]+k\E\left[\int_0^T(|X^{\prime}_s-X_s|^2+|Y^{\prime}_s-Y_s|^2+\|Z^{\prime}_s-Z_s\|^2)ds\right]\nn \\&\qquad \qquad \leq \E[\int_0^T\left\{\frac{C^{\nu}}{2}(3\alpha+\frac{1}{\alpha})|X^{\prime}_s-X_s|^2 +
\frac{C^{\nu}}{2}(3\alpha+\frac{1}{\alpha})|Y^{\prime}_s-Y_s|^2 +
\frac{C^{\nu}}{2\alpha}\|Z^{\prime}_s-Z_s\|^2 \right\}ds]
\end{align*}
or
\begin{align*}
&0\leq(k^{\prime} -C_g^\nu)\E[|X^{\prime}_T-X_T|^2]\le \E[\int_0^T\left\{[\frac{C^{\nu}}{2}(3\alpha+\frac{1}{\alpha})-k]|X^{\prime}_s-X_s|^2 +\right.\\
&\qquad \qquad \qquad \qquad \left. 
[\frac{C^{\nu}}{2}(3\alpha+\frac{1}{\alpha})-k]|Y^{\prime}_s-Y_s|^2+
[\frac{C^{\nu}}{2\alpha}-k]\|Z^{\prime}_s-Z_s\|^2 \right\}ds].
\end{align*}
Noticing now that $C_g^\nu,C^{\nu}  \leq min \{(\sqrt{3}-1)k^{\prime}, \frac{\sqrt{3}}{3}k \} $, with $\alpha=\frac{\sqrt{3}}{3}$, all the coefficients of the right hand side of the above inequality are negative, which implies that, $\bP$-a.s. for all $0\le s\le T$, $X^{\prime}_s=X_s$, $Y^{\prime}_s=Y_s$, and $Z^{\prime}_s=Z_s$, $ds\otimes d\bP$-a.e. Thus the solution of \eqref{mf-bfsde} is unique.
\qed

\subsection{Existence and uniqueness results when $\sigma$ does not depend on the mean-field term}${}$
\medskip

Assuming $\sigma$ does not depend on $\mathbb{P}_{(X_t,Y_t)}$ i.e. the MF-BFSDE \eqref{mf-bfsde} becomes: $\forall t\le T$, 
\begin{equation}\label{mf-bfsde no P}
\left\{\begin{array}{lll}
X, Y \in \sdm \mbox{ and }Z\in \hdm ;\\\\
X_t=x+\int_0^t f(s,X_s,Y_s,Z_s,\mathbb{P}_{(X_s, Y_s)})ds+\int_0^t \sigma (s,X_s,Y_s,Z_s)dW_s,\quad t\le T,\\ \\
Y_t=g(X_T,\mathbb{P}_{X_T})-\int_t^Th(s,X_s,Y_s,Z_s,\mathbb{P}_{(X_s, Y_s)})ds-\int_t^T Z_sdW_s,\quad t\le T.
\end{array}
\right.
\end{equation}
In this framework, the condition (H1) can be relaxed to the following assumption. 
\begin{eqnarray*}
(H1^\prime)\left\{ \begin{array}{lll}
\text{(i) there exists $k >0$, s.t. for all } t\in[0,T], \nu\in\bM_2(\R^{m}\times\R^m), u,u^{\prime} \in \R^{m+m+m\times m}, \\
\quad \mathcal{A}(t,u,u^{\prime},\nu)\le -k(|x-x^{\prime}|^2+|y-y^{\prime}|^2),\quad \bP\as \\  \\
\text{(ii) there exists $k^{\prime} >0$, s.t. for all } \nu\in\bM_2(\R^{m}\times\R^m), x,x^{\prime}\in\R^m, \\
(g(x,\nu)-g(x^{\prime},\nu))\cdot(x-x^{\prime})\ge k^{\prime}|x-x^{\prime}|^2, \quad \bP\as  \\ 
\end{array}
\right.
\end{eqnarray*}

\medskip
\noindent
Following similar steps as in the proof of Theorem \ref{T123}, we have the following: 
 
\begin{theorem}[Existence and Uniqueness of a solution]\label{T123-2} Let Assumption $(H1^{\prime})$ hold. If the constants $C_g^\nu, C^{\nu}, k, k^{\prime}$ satisfy the inequalities 
\begin{equation}\label{coef condi-2}
C_g^\nu,C^{\nu}  < min \{2(\sqrt{2}-1)k^{\prime}, \frac{\sqrt{2}}{2}k \},
\end{equation}
then there exists a unique process $U=(X,Y,Z)$ which belongs to 
$\sdm\times \sdm\times \hdm$ and which solves the MF-BFSDE \eqref{mf-bfsde no P}.
\end{theorem}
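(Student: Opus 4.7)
The plan is to follow the proof of Theorem~\ref{T123} step by step and exploit the two simplifications that $(H1^\prime)$ and the $\nu$-independence of $\sigma$ bring. I would keep the same implicit iteration \eqref{T1-n}; the key observation is that now $\bar\sigma^n(t)\equiv 0$, so in the It\^o-duality expansion of $\hat X^{n+1}_T\cdot\hat Y^{n+1}_T$ (exactly as in \eqref{T1-duality-2}), the integrand reduces to
\[
\hat X^{n+1}\cdot\hat h^{n+1}+\hat Y^{n+1}\cdot\hat f^{n+1}+[\hat\sigma^{n+1},\hat Z^{n+1}]=\mathcal A(t,U^{n+1}_t,U^n_t,\nu^n_t)+\hat X^{n+1}\cdot\bar h^n+\hat Y^{n+1}\cdot\bar f^n,
\]
so that $(H1^\prime)(i)$ yields $\mathcal A\le-k(|\hat X^{n+1}|^2+|\hat Y^{n+1}|^2)$, and Young's inequality with parameter $\alpha$ applied to the two (instead of three) remaining Wasserstein cross terms gives the coefficient $\alpha C^\nu$ in place of the earlier $\tfrac{3\alpha C^\nu}{2}$.

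After the same $\rho\to 1$, $\delta\downarrow 0$ limiting device as in Theorem~\ref{T123}, the resulting estimate reads
\begin{align*}
\bigl(k^{\prime}-\tfrac{C_g^\nu\varepsilon}{2}\bigr)\E|\hat X^{n+1}_T|^2&+\bigl(k-\tfrac{C^\nu}{2\alpha}\bigr)\E\!\int_0^T\bigl(|\hat X^{n+1}_s|^2+|\hat Y^{n+1}_s|^2\bigr)ds\\
&\le\tfrac{C_g^\nu}{2\varepsilon}\E|\hat X^n_T|^2+\alpha C^\nu\,\E\!\int_0^T\bigl(|\hat X^n_s|^2+|\hat Y^n_s|^2\bigr)ds.
\end{align*}
Setting $\tilde A_n:=\E|\hat X^n_T|^2+\E\int_0^T(|\hat X^n_s|^2+|\hat Y^n_s|^2)ds$, this is a strict contraction $\tilde A_{n+1}\le(\theta^*/\lambda^*)\tilde A_n$ as soon as $\lambda^*:=\min\{k^{\prime}-\tfrac{C_g^\nu\varepsilon}{2},k-\tfrac{C^\nu}{2\alpha}\}>\theta^*:=\max\{\tfrac{C_g^\nu}{2\varepsilon},\alpha C^\nu\}$. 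I would minimise the maps $\varepsilon\mapsto\tfrac{\varepsilon}{2}+\tfrac{1}{2\varepsilon}$ and $\alpha\mapsto\alpha+\tfrac{1}{2\alpha}$ at $\varepsilon=1$ and $\alpha=\tfrac{1}{\sqrt 2}$ (minimum value $\sqrt 2$, replacing the $\sqrt 3$ of Theorem~\ref{T123}), and then rerun the two-case analysis depending on whether $\gamma_1 k\le\gamma_2 k^{\prime}$ or not, exactly as in Theorem~\ref{T123}, to recover \eqref{coef condi-2} as the sharp sufficient condition on $(C_g^\nu,C^\nu)$.

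The essentially new point, and the main obstacle, is that $(H1^\prime)$ provides no monotone control on $|\hat Z^{n+1}|^2$, so the above step alone does not show that $(Z^n)_n$ is Cauchy in $\hdm$. This is recovered by reading the difference equation
\[
\hat Y^{n+1}_t=\Delta g^n_T-\int_t^T\hat h^{n+1}(s)\,ds-\int_t^T\hat Z^{n+1}_s\,dW_s,\quad \Delta g^n_T:=g(X^{n+1}_T,\mu^n_T)-g(X^n_T,\mu^{n-1}_T),
\]
as a BSDE with Lipschitz driver in $(\hat Y^{n+1},\hat Z^{n+1})$ and source term depending on $\hat X^{n+1}$ and $d(\nu^n,\nu^{n-1})$. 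The standard energy estimate for Lipschitz BSDEs, after a Young-absorption of $\|\hat Z^{n+1}\|^2$, then yields
\[
\E\!\int_0^T\|\hat Z^{n+1}_s\|^2\,ds\le K\bigl(\tilde A_{n+1}+\tilde A_n\bigr)
\]
for a constant $K$ independent of $n$, so that $(Z^n)$ is Cauchy in $\hdm$. Passing to the limit in \eqref{T1-n}, combined with the Burkholder--Davis--Gundy inequality to upgrade to $\E[\sup_{t\le T}(|X^n_t-X_t|^2+|Y^n_t-Y_t|^2)]\to 0$, identifies the limit $(X,Y,Z)\in\sdm\times\sdm\times\hdm$ as a solution of \eqref{mf-bfsde no P}.

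Uniqueness reproduces the argument of Theorem~\ref{T123} verbatim on two solutions $(X,Y,Z)$ and $(X^{\prime},Y^{\prime},Z^{\prime})$: the same It\^o-duality identity gives $\Gamma_T=\E[(X^{\prime}_T-X_T)\cdot(Y^{\prime}_T-Y_T)]$, with unchanged lower bound $\Gamma_T\ge(k^{\prime}-C_g^\nu)\E|X^{\prime}_T-X_T|^2$, while the upper bound simplifies (again because $\bar\sigma=0$) to
\[
\Gamma_T\le\E\!\int_0^T\bigl(\sqrt 2\,C^\nu-k\bigr)\bigl(|X^{\prime}_s-X_s|^2+|Y^{\prime}_s-Y_s|^2\bigr)\,ds
\]
at $\alpha=\tfrac{1}{\sqrt 2}$; under \eqref{coef condi-2} both coefficients have the right sign, forcing $X=X^{\prime}$, $Y=Y^{\prime}$ almost surely, and then $Z=Z^{\prime}$ $ds\otimes d\bP$-a.e.\ from the backward equation. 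Throughout, the delicate bookkeeping point is to keep the $(X,Y,X_T)$-contraction step strictly separated from the BSDE-based $Z$ step, since $(H1^\prime)$ no longer allows a single joint contraction on the full triple $(X,Y,Z)$.
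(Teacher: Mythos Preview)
Your overall strategy is the paper's: exploit $\bar\sigma^n\equiv 0$ so that only two Wasserstein cross terms survive, get a contraction in $(X_T,X,Y)$ alone under \eqref{coef condi-2} with the optimal choice $\alpha=\tfrac{1}{\sqrt 2}$, and then recover the Cauchy property of $(Z^n)_n$ from a separate BSDE energy estimate. The uniqueness argument is also correct and matches the paper's.

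There is, however, one genuine gap in the existence step. You say you ``keep the same implicit iteration \eqref{T1-n}'', but \eqref{T1-n} carries the perturbation $-\delta(\hat Z^{n+1}_s)$ in the \emph{diffusion} of the forward equation. When you expand $\hat X^{n+1}_T\cdot\hat Y^{n+1}_T$ by It\^o and take $\rho=1$, this produces the extra terms $\tfrac{\delta}{2}\,\E\!\int_0^T\|\hat Z^{n+1}_s\|^2\,ds$ on the left and $\tfrac{\delta}{2}\,\E\!\int_0^T\|\hat Z^{n}_s\|^2\,ds$ on the right, with \emph{equal} coefficients. These do not vanish for any fixed $\delta>0$ and cannot be absorbed into your $(X_T,X,Y)$-only quantity $\tilde A_n$; so the displayed inequality you write is not what \eqref{T1-n} actually yields, and the contraction $\tilde A_{n+1}\le(\theta^*/\lambda^*)\tilde A_n$ does not follow from it. (Indeed, $(H1')$ gives no monotone control of $\|\hat Z^{n+1}\|^2$ to compensate.)

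The paper's fix is exactly to \emph{modify} the iteration: in \eqref{approximsanssima} the diffusion is just $\sigma(s,U^{n+1}_s)$, with the $-\delta(\hat Z^{n+1})$ term removed, while the drift perturbation $-\delta(\hat Y^{n+1})$ is kept. Then the It\^o duality identity \eqref{T2-duality-2} contains only the $\delta|\hat Y^{n+1}|^2$ and $\delta\,\hat Y^{n+1}\cdot\hat Y^n$ terms, and your displayed $(X_T,X,Y)$ inequality is exactly what results. Once you make this one change to the scheme, the rest of your argument---including the separate BSDE bound for $Z^n$ and the passage to the limit---goes through and coincides with the paper's proof.
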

\noindent \underline{Proof}: 
\noindent (i) \underline{Existence of a solution}:  Let $\delta>0$ and consider the sequence $U^{n}=(X^n,Y^n,Z^n)_{n\ge 0}$ of processes defined recursively as follows: 
$(X^0,Y^0,Z^0)=(0,0,0)$ and, for $n \ge 0, U^{n+1}$ satisfies,  for every $0\le t\leq T$, 
\begin{equation}\label{approximsanssima}\left\{
\begin{array}{lll}
U^{n+1}=(X^{n+1},Y^{n+1},Z^{n+1})\in \sdm\times \sdm\times \hdm \,\,; \\\\
X^{n+1}_t=x+\int_0^t \left\{f(s,U^{n+1}_s,\nu^{n}_s)-\delta (Y^{n+1}_s-Y^n_s)\right\}ds \\ \\ \qquad\qquad+\int_0^t \sigma (s,U^{n+1}_s)dW_s,\\ \\
Y^{n+1}_t=g(X^{n+1}_T,\mu^n_T)-\int_t^Th(s,U^{n+1}_s,\nu^n_s)ds-\int_t^T Z^{n+1}_sdW_s,
\end{array}\right.
\end{equation}
where $\nu^n_t:=\mathbb{P}_{(X^n_t, Y^n_t)}$ and $\mu^n_T:=\mathbb{P}_{X^n_T}$. By Theorem 1.2 in \cite{H98} (or \cite{Peng-wu}, pp.833), the system \eqref{approximsanssima} admits a unique solution. We will show that $(U^n)_{n\ge 0}$ is a Cauchy sequence in $\mathcal{M}^{2,m+m+m\times m}$ and $(X^n_T)_{n\ge 0}$ is a Cauchy sequence in $L^2(d\bP)$.
For $n\ge 1,\ t\in[0,T]$, recall the processes 
$\hat{X}^{n+1}$, $\hat{Y}^{n+1}$, $\hat{Z}^{n+1}$, 
$\widehat{\varphi}^{n+1}$ and $\overline{\varphi}^{n}$ defined respectively in \eqref{num1} and \eqref{num2}. 
\medskip

\noindent Applying It\^o's formula, we obtain
\begin{equation}\label{T2-duality-1}\begin{array}{lll}
\hat{X}^{n+1}_T\cdot\hat{Y}^{n+1}_T-\hat{X}^{n+1}_0\cdot\hat{Y}^{n+1}_0=\int_0^T \hat{Y}^{n+1}_s\cdot \{\hat{f}^{n+1}(s)-\delta(\hat{Y}^{n+1}_s-\hat{Y}^{n}_s)\}ds \\ \\ \qquad\qquad\qquad\qquad\qquad\qquad
+\int_0^T \hat{Y}^{n+1}_s\cdot \hat{\sigma}^{n+1}(s)dW_s \\ \\ \qquad\qquad\qquad\qquad\qquad\qquad 
+\int_0^T \hat{X}^{n+1}_s\cdot \hat{h}^{n+1}(s)ds+ \int_0^T \hat{X}^{n+1}_s\cdot \hat{Z}^{n+1}_sdW_s   \\ \\ \qquad\qquad\qquad\qquad \qquad\qquad
+\int_0^T [\hat{\sigma}^{n+1}(s), \hat{Z}^{n+1}_s]ds.
\end{array}
\end{equation}
Similarly as above, we  take expectation to obtain
\begin{equation}\label{T2-duality-2}\begin{array}{lll}
\E[\hat{X}^{n+1}_T\cdot(g(X^{n+1}_T,\mu_T^{n})- g(X^{n}_T,\mu_T^{n-1}))] +
\delta \E\left[\int_0^T  |\hat{Y}^{n+1}_s|^2 ds \right] \\ \\ \qquad
 -\E\left[\int_0^T\left( \hat{X}^{n+1}_s \cdot \hat{h}^{n+1}(s)+\hat{Y}^{n}_s\cdot\hat{f}^{n+1}(s)+[\hat{\sigma}^{n+1}(s),\hat{Z}^{n+1}_s]\right)ds\right] \\ \\
 \qquad= \delta \E\left[\int_0^T \hat{Y}^{n+1}_s\cdot \hat{Y}^{n}_s ds\right].
 \end{array}
\end{equation}
 \noindent Using the Lipschitz continuity of $g$,  Young's inequality, \eqref{dist} and (H$1^{\prime}$(ii)), we have, for any $\varepsilon>0$, 
\begin{equation}\label{n21}
\begin{array}{lll}
\E[\hat{X}^{n+1}_T \cdot (g(X^{n+1}_T,\mu_T^{n})- g(X^{n}_T,\mu_T^{n-1}))] =\E[\hat{X}^{n+1}_T\cdot (g(X^{n+1}_T,\mu_T^{n})- g(X^{n}_T,\mu_T^{n}))] \\ \\ 
\qquad\qquad\qquad+\E[\hat{X}^{n+1}_T\cdot (g(X^{n}_T,\mu_T^{n})- g(X^{n}_T,\mu_T^{n-1}))] \\ \\  
\qquad\qquad\qquad\ge k^{\prime}\E[|\hat{X}^{n+1}_T|^2]-C_g^\nu\E[|\hat{X}^{n+1}_T|]d(\mu_T^{n},\mu_T^{n-1}) \\ \\ 
 \qquad\qquad\qquad\ge  k^{\prime}\E[|\hat{X}^{n+1}_T|^2]-\frac{C_g^\nu\varepsilon}{2} \E[|\hat{X}^{n+1}_T|^2]-\frac{C_g^\nu}{2\varepsilon}d^2(\mu_T^{n},\mu_T^{n-1})  \\ \\ \qquad\qquad\qquad \ge (k^{\prime}-\frac{C_g^\nu\varepsilon}{2})\E[|\hat{X}^{n+1}_T|^2]-\frac{C_g^\nu}{2\varepsilon}\E[|\hat{X}^{n}_T|^2].
 \end{array}
\end{equation}
Again, by the Lipschitz continuity of $f, h,\sigma$,  Young's inequality, \eqref{dist} and (H$1^{\prime}$(i)), we also have, for every $0\le  t\le T$ and any $\alpha>0$,
 $$
 \begin{array}{lll}
&\hat{X}^{n+1}_t \cdot \hat{h}^{n+1}(t)+\hat{Y}^{n+1}_t\cdot \hat{f}^{n+1}(t)+[\hat{\sigma}^{n+1}(t),\hat{Z}^{n+1}_t]\\ \\&=\mathcal{A}(t,U_t^{n+1},U_t^{n},\nu^{n}_t)+\hat{X}^{n+1}_t\cdot\bar{h}^{n}(t)+\hat{Y}^{n+1}_t\cdot\bar{f}^{n}(t)\\  \\
&\le -k\left\{|\hat{X}^{n+1}_t|^2+|\hat{Y}^{n+1}_t|^2\right\}+|\hat{X}^{n+1}_t|| \bar{h}^{n}(t)|+|\hat{Y}^{n+1}_t||\bar{f}^{n}(t)|\\ \\ &
\le -k\left\{|\hat{X}^{n+1}_t|^2+|\hat{Y}^{n+1}_t|^2\right\}+C^{\nu} d(\nu_t^{n},\nu_t^{n-1})\left(|\hat{X}^{n+1}_t|
 +|\hat{Y}^{n+1}_t|\right)\\\ \\ &
\le -k\left\{|\hat{X}^{n+1}_t|^2+|\hat{Y}^{n+1}_t|^2\right\}+\frac{ C^{\nu}}{2\alpha}|\hat{X}^{n+1}_t |^2+\frac{ C^{\nu}}{2\alpha}|\hat{Y}^{n+1}_t |^2+\alpha C_\phi^\nu \cdot d^2(\nu_t^{n},\nu_t^{n-1}). 
\end{array}
$$
Since $d^2(\nu_t^{n},\nu_t^{n-1})\le \E[|\hat{X}^{n}_t|^2+|\hat{Y}^{n}_t|^2]$, then 
\begin{eqnarray}\label{n22}\begin{array}{lll}
\E\left[\int_0^T\left( \hat{X}^{n+1}_s \cdot \hat{h}^{n+1}(s)+\hat{Y}^{n+1}_s\cdot \hat{f}^{n+1}(s)+[\hat{\sigma}^{n+1}(s),\hat{Z}^{n+1}_s]\right)ds\right] \\ \\ \qquad \qquad\le 
\E\int_0^T \left( (\frac{ C^{\nu}}{2\alpha}-k)(|\hat{X}^{n+1}_t |^2+|\hat{Y}^{n+1}_t |^2)+\alpha C_\phi^\nu \E[|\hat{X}^{n}_t|^2+|\hat{Y}^{n}_t|^2]\right) ds.
\end{array}
\end{eqnarray}
Furthermore, we have, for any $\rho>0$, 
\begin{equation}\label{n23}
\begin{array}{lll}
\E\left[\int_0^T\hat{Y}^{n+1}_s\cdot \hat{Y}^{n}_s \ ds\right]\le 
\frac{1}{2}\E\left[\int_0^T\left(\rho |\hat{Y}^{n+1}_s|^2  +\frac{1}{\rho}|\hat{Y}^{n}_s|^2\right)ds\right].
\end{array}
\end{equation}
Applying now the three inequalities 
\eqref{n21}, \eqref{n22} and \eqref{n23} to \eqref{T1-duality-2}, yields
\begin{equation*}\begin{array}{lll}
&(k^{\prime}-\frac{C_g^\nu\varepsilon}{2})\E[|\hat{X}^{n+1}_T|^2]-\frac{C_g^\nu}{2\varepsilon}\E[|\hat{X}^{n}_T|^2] +
\delta \E\left[\int_0^T  |\hat{Y}^{n+1}_s|^2 ds \right] \\ \\ 
&- \E\left[\int_0^T\left\{ (\frac{ C^{\nu}}{2\alpha}-k)\cdot |\hat{X}^{n+1}_t |^2+(\frac{ C^{\nu}}{2\alpha}-k) \cdot|\hat{Y}^{n+1}_t |^2+\alpha C_\phi^\nu\E[|\hat{X}^{n}_t|^2+|\hat{Y}^{n}_t|^2] \right\}ds\right]
\\ \\ \qquad &\le \delta \E\left[\int_0^T\left(\frac{\rho}{2} |\hat{Y}^{n+1}_s|^2 + \frac{1}{2\rho}|\hat{Y}^{n}_s|^2\right)ds\right].
\end{array}
\end{equation*}
Rearranging terms, we obtain
\begin{equation*}\begin{array}{lll}
&(k^{\prime}-\frac{C_g^\nu\varepsilon}{2})\E[|\hat{X}^{n+1}_T|^2]+
\E\left[\int_0^T(k-\frac{C^{\nu}}{2\alpha})|\hat{X}^{n+1}_t|^2ds\right] +
\E\left[\int_0^T \left(\delta (1-\frac{\rho}{2} \right)+k-\frac{ C^{\nu}}{2\alpha})|\hat{Y}^{n+1}_s|^2ds \right] \\ \\&\le \frac{C_g^\nu}{2\varepsilon}\E[|\hat{X}^{n}_T|^2] +\E\left[\int_0^T\left(\alpha C^{\nu}|\hat {X}^{n}_s|^2+(\frac{\delta}{2\rho}+\alpha C^{\nu})|\hat{Y}^{n}_s|^2\right)ds\right].
\end{array}
\end{equation*}Taking $\rho=1$, we obtain
\begin{equation*}\begin{array}{lll}
&(k^{\prime}-\frac{C_g^\nu\varepsilon}{2})\E[|\hat{X}^{n+1}_T|^2]+
\E\left[\int_0^T(k-\frac{C^{\nu}}{2\alpha})|\hat{X}^{n+1}_t|^2ds\right] + \E\left[\int_0^T \left(\frac{1}{2}\delta +k-\frac{ C^{\nu}}{2\alpha})|\hat{Y}^{n+1}_s|^2\right)ds \right] \\  \\&\le \frac{C_g^\nu}{2\varepsilon}\E[|\hat{X}^{n}_T|^2] +\E\left[\int_0^T\left(\alpha C^{\nu}|\hat {X}^{n}_s|^2+(\frac{1}{2}\delta+\alpha C^{\nu})|\hat{Y}^{n}_s|^2\right)ds\right].
\end{array}
\end{equation*}
Let us set 
\begin{equation}\label{lb and thta}
\begin{aligned}
&\lambda(\epsilon, \delta, \alpha):=\min\{k^{\prime}-\frac{C_g^\nu \varepsilon}{2}, \frac{1}{2}\delta +k-\frac{C^{\nu}}{2\alpha}\}, \\&
\theta(\epsilon, \delta, \alpha):=\max\{\frac{C_g^\nu}{2\varepsilon}, \frac{1}{2}\delta+\alpha C^{\nu}\}.
\end{aligned}
\end{equation}
Then, it holds that
\begin{equation}\label{T2-contraction2}
\begin{array}{lll}
\E[|\hat{X}^{n+1}_T|^2]+\E\left[\int_0^T |\hat{X}^{n+1}_t|^2+|\hat{Y}^{n+1}_t|^2 ds\right]\le \frac{\theta}{\lambda}\left(\E[|\hat{X}^{n}_T|^2]+\E\left[\int_0^T |\hat{X}^{n+1}_t|^2+|\hat{Y}^{n+1}_t|^2ds\right] \right).
\end{array}
\end{equation}
Now, if there exist $\alpha$, $\varepsilon$, $\delta$ so that $\theta<\lambda$, the inequality 
\eqref{T2-contraction2} becomes a contraction. Thus,  $(X_T^n)_{n\ge 0}$ is a Cauchy sequence in $L^2(\Omega,\bP)$ and $(X^n)_{n\ge 0}$ and $(Y^n)_{n\ge 0}$  are Cauchy sequences in  $L^2([0,T]\times\Omega, dt\otimes d\bP)$. \\
To make the contraction meaningful, we assume $k^{\prime}-\frac{C_g^\nu \varepsilon}{2}$ and $k-\frac{C^{\nu}}{2\alpha}$  are positive. 
Next, similarly to Theorem \ref{T123}, since $\delta > 0$ can be chosen small enough,  we only need to solve the following system of inequalities (which stem from the limits of $\lambda$ and $\theta$ as $\delta \rightarrow 0$):
\begin{equation}\label{T2-contraction coeffcient}
\left\{\begin{aligned}
k^{\prime}-\frac{C_g^\nu \varepsilon}{2}& >\frac{C_g^\nu}{2\varepsilon} \\
k-\frac{C^{\nu}}{2\alpha}& >\frac{C_g^\nu}{2\varepsilon}  \\ 
k-\frac{C^{\nu}}{2\alpha}& >\alpha C^{\nu} \\
k^{\prime}-\frac{C_g^\nu \varepsilon}{2}& >\alpha C^{\nu} .
\end{aligned}\right.
\end{equation}
As in the proof of Theorem \ref{T123}, for those inequalities, we choose $\alpha= \frac{\sqrt{2}}{2}$, $ \varepsilon = 1$ and set $\gamma_3,\gamma_4 > 0$ to be the coefficients satisfying the $C_g^\nu,C^{\nu}<  min\{\gamma_3 k, \gamma_4 k^{\prime}\}$. 
Assuming $\gamma_3 k \leq \gamma_4 k^{\prime}$, \eqref{T2-contraction coeffcient} holds if the following system of inequalities holds:

\begin{equation}\label{T2-coef 2}
\left\{
\begin{aligned}
&k^{\prime} > C_g^\nu\\
&k >\sqrt{2} C^{\nu} \\
&k-\frac{\sqrt{2}\gamma_3 k }{2} > \frac{\gamma_3 k }{2} \\
&k^{\prime}-\frac{\gamma_3 k  }{2} > \frac{\sqrt{2}\gamma_3 k }{2}.
\end{aligned}\right.
\end{equation}
From the third inequality, we obtain $\gamma_3 < 2(\sqrt{2} - 1)$. For the forth inequality in \eqref{T2-coef 2} to be satisfied, it is enough to have
\begin{equation*}
\begin{aligned}
\frac{\gamma_3 k}{\gamma_4}-\frac{\gamma_3 k  }{2} >\frac{\sqrt{2}\gamma_3 k }{2}
\end{aligned}
\end{equation*}
which means $\gamma_4 <2(\sqrt{2} -1 )$ . The result under the other condition $\gamma_1 k > \gamma_2 k^{\prime}$ turns out to be the same, as it can be easily checked. Finally, compared with $C^{\nu} <\frac{\sqrt{2}}{2} k$ , we obtain a sufficient condition $C_g^\nu,C^{\nu}  < min \{2(\sqrt{2}-1)k^{\prime}, \frac{\sqrt{2}}{2}k \}$
satisfying $\gamma(\epsilon, \delta, \alpha, \rho)<\theta(\epsilon, \delta, \alpha, \rho)$ , when $\varepsilon=1$, $\alpha=\frac{\sqrt{2}}{2}$ and $\delta >0$ is small enough.

\medskip
We now show that $(Z^n)_{n\ge 0}$ is also a Cauchy sequence in $L^2([0,T]\times\Omega, dt\otimes d\bP)$ .\\ For $n,m\ge 0$,  we have 
 \begin{equation*}
\begin{aligned}
d(Y^n_t-Y^m_t)=(h(t,U^{n}_t,\nu^{n-1}_t)-h(t,U^{m}_t,\nu^{m-1}_t))dt+(Z^n_t-Z^m_t)dW_t.
\end{aligned}
\end{equation*}
By applying the It\^o formula and then taking expectation, we obtain
  \begin{equation*}
\begin{aligned}
\E[|Y^n_T-Y^m_T|^2-|Y^n_t-Y^m_t|^2]=\E[\int_t^T\left\{2|Y^n_s-Y^m_s||h(s,U^{n}_s,\nu^{n-1}_s)-h(s,U^{m}_s,\nu^{m-1}_s)|+\|Z^n_s-Z^m_s\|^2\right\}ds].
\end{aligned}
\end{equation*}
In view of the Lipschitz condition on the coefficients and Young's inequality, we have, for any $\beta>0$,
 \begin{equation*}
\begin{aligned}
\E\int_t^T\|Z^n_s&-Z^m_s\|^2ds=\E[|Y^n_T-Y^m_T|^2-|Y^n_t-Y^m_t|^2]+\E\int_t^T2|Y^n_s-Y^m_s||h(t,U^{n}_s,\nu^{n-1}_s)-h(t,U^{m}_s,\nu^{m-1}_s)|ds\\
&\leq \E[|Y^n_T-Y^m_T|^2]+\E[\int_t^T2C^u|Y^n_s-Y^m_s|\left\{|X^n_s-X^m_s|+|Y^n_s-Y^m_s|\right.\\
 &\qquad +\left.\|Z^n_s-Z^m_s\|\right\}+2C^{\nu} |Y^n_s-Y^m_s|d(\nu_s^{n-1}-\nu_s^{m-1}) ds]\\
&\leq  \E[|Y^n_T-Y^m_T|^2]+\E[\int_t^T\left\{2C^u|Y^n_s-Y^m_s|\left\{|X^n_s-X^m_s|+|Y^n_s-Y^m_s|\right\}+\frac{1}{2\beta}\|Z^n_s-Z^m_s\|^2\right.\\
 &\left.\qquad +2\beta(C^u)^2|Y^n_s-Y^m_s|^2+C^{\nu} \{(|Y^n_s-Y^m_s|)^2+\E[|X^{n-1}_s-X^{m-1}_s|^2+|Y^{n-1}_s-Y^{m-1}_s|^2]\}\right\} ds].\\
\end{aligned}
\end{equation*}
Let $\beta=1$ and $t=0$. We then have
 \begin{equation}\label{estimat Z}
 \begin{aligned}
\frac{1}{2}\E\int_0^T\|Z^n_s&-Z^m_s\|ds \leq  \E[|Y^n_T-Y^m_T|^2]+\E\left [\int_0^T\left\{2C_\phi^u|Y^n_s-Y^m_s|\left\{|X^n_s-X^m_s|+|Y^n_s-Y^m_s|\right\}\right.\right.\\
 &\left.+\left.2(C^u)^2|Y^n_s-Y^m_s|^2+C^\nu \{|Y^n_s-Y^m_s|^2+\E[|X^{n-1}_s-X^{m-1}_s|^2+|Y^{n-1}_s-Y^{m-1}_s|^2]\}\right\} ds\right ].
\end{aligned}
\end{equation}
Since  $(X_T^n)_{n\ge 0}$ is a Cauchy sequence in $L^2(\Omega,\bP)$ and $(X^n)_{n\ge 0}$ and $(Y^n)_{n\ge 0}$  are Cauchy sequences in  $L^2([0,T]\times\Omega, dt\otimes d\bP)$,  
$(Z^n)_{n\ge 0}$ is also a Cauchy sequence in $L^2([0,T]\times\Omega, dt\otimes d\bP)$ . Therefore, by standard calculations, we also have 
$$\E[\sup_{s\le T}(|X^n_s-X^m_s|^2+|Y^n_s-Y^m_s|^2)]\rightarrow 0\mbox{ as }n,m\rightarrow\infty.$$
Consequently, there exist $\mathbb{F}$-adapted continuous processes $X$ and $Y$ and an $\mathbb{F}$-progressively measurable process $Z$ such that
$$\E[\sup_{s\le T}(|X^n_s-X_s|^2+|Y^n_s-Y_s|^2)+\int_0^T\|Z^n_s-Z_s\|^2ds]\rightarrow 0\,\,\, \mbox{ as }\,\, n\rightarrow\infty.$$
Moreover, 
$$\E[\sup_{s\le T}(|X_s|^2+|Y_s|^2)+\int_0^T\|Z_s\|^2ds]<\infty .$$
By taking the limit with respect to $n$ in equation \eqref{approximsanssima} we obtain that $(X,Y,Z)$ is a solution of MF-BFSDE \eqref{mf-bfsde no P}. 
\medskip

\noindent (ii) \underline{Uniqueness of the solution}: 
Let $U^{\prime}=(X^{\prime},Y^{\prime},Z^{\prime})$ be another solution to \eqref{mf-bfsde no P} and set 
\begin{equation}\label{Gamma-T1}
\begin{array}{lll}
\Gamma_T:=  \E\left[\int_0^T\left\{  (f(s,U^{\prime}_s,\nu^{\prime}_s)-f(s,U_s,\nu_s))\cdot(Y^{\prime}_s-Y_s) \right. \right. \\  \\ \qquad \qquad \qquad  \left. \left.
  +(h(s,U^{\prime}_s,\nu^{\prime}_s)-h(s,U_s,\nu_s))\cdot(X^{\prime}_s-X_s) \right. \right. \\  \\ \qquad \qquad \qquad \qquad  \left. \left.
+[\sigma(s,U^{\prime}_s)-\sigma(s,U_s), Z^{\prime}_s-Z_s] \right\}ds\right].
\end{array}
\end{equation}
Applying It\^o's formula to the product $(X^{\prime}_T-X_T)\cdot(Y^{\prime}_T-Y_T)$ and taking expectation, we obtain
\begin{equation}\label{T2-uni-duality-1}
\E[(X^{\prime}_T-X_T)\cdot(Y^{\prime}_T-Y_T)] = \Gamma_T. 
\end{equation}
In view of $(H1^{\prime})$, \eqref{dist}, the Lipschitz continuity of $f, h, \sigma$ and $g$, and the Cauchy-Schwarz inequality, we have
$$\begin{array}{lll}
\Gamma_T=\E[(X^{\prime}_T-X_T)\cdot(Y^{\prime}_T-Y_T)]\ge k^{\prime}\E[|X^{\prime}_T-X_T|^2]-C_g^\nu\E[|X^{\prime}_T-X_T|]d(\mu^{\prime}_T,\mu_T)\\ \\ \qquad\qquad\qquad\qquad\qquad \ge  k^{\prime} \E[|X^{\prime}_T-X_T|^2]-C_g^\nu\E[|X^{\prime}_T-X_T|]\E[|X^{\prime}_T-X_T|^2]^{\frac{1}{2}} 
\\ \\ \qquad\qquad\qquad\qquad\qquad \ge  (k^{\prime} -C_g^\nu)\E[|X^{\prime}_T-X_T|^2].
\end{array}
$$
Therefore,  
\begin{equation}\label{T2-uni-duality-2}
\Gamma_T\ge (k^{\prime} -C_g^\nu)\E[|X^{\prime}_T-X_T|^2].
\end{equation}
On the other hand, we have
$$
\begin{array}{lll}
\Gamma_T\le \E\left[\int_0^T\left\{ \mathcal{A}(s,U_s,U^{\prime}_s,\nu_s)+(C^{\nu}|X^{\prime}_s-X_s|+C^{\nu}|Y^{\prime}_s-Y_s|)d(\nu_s,\nu^{\prime}_s)\right\}ds\right].
\end{array}
$$
But, 
$$
d(\nu_s,\nu^{\prime}_s)\le \sqrt{\E[|X^{\prime}_s-X_s|^2+|Y^{\prime}_s-Y_s|^2]}.
$$
Therefore, by the use of Young's inequality three times we obtain
\begin{align}\label{T2-majorationgamma2}
\Gamma_T\le \E\int_0^T\left\{ -k(|X^{\prime}_s - X_s|^2+|Y^{\prime}_s-Y_s|^2)+\frac{C^{\nu}}{2}(2\alpha+\frac{1}{\alpha})(|X^{\prime}_s-X_s|^2 +|Y^{\prime}_s-Y_s|^2)
\right\} ds. 
\end{align}
Now, combining \eqref{T2-uni-duality-2} and \eqref{T2-majorationgamma2} we obtain
\begin{equation}\label{uni-ineq2}
\begin{aligned}
&0\leq(C_g^\nu-k^{\prime} )\E[|X^{\prime}_T-X_T|^2]\le \E[\int_0^T\left\{[\frac{C^{\nu}}{2}(2\alpha+\frac{1}{\alpha})-k]|X^{\prime}_s-X_s|^2 +\right.\\
&\qquad\qquad\qquad\qquad\qquad\qquad\qquad\qquad\left. 
[\frac{C^{\nu}}{2}(2\alpha+\frac{1}{\alpha})-k]|Y^{\prime}_s-Y_s|^2 \right\}ds.
\end{aligned}
\end{equation}
Since $C_g^\nu,C^{\nu}  < min \{2(\sqrt{2}-1)k^{\prime}, \frac{\sqrt{2}}{2}k \} $, with $\alpha=\frac{\sqrt{2}}{2}$, all the coefficients in the inequality \eqref{uni-ineq2} are negative. It follows that $\bP$-a.s. for any $s\in [0,T]$, $X^{\prime}_s=X_s$, $Y^{\prime}_s=Y_s$, and using the estimation \eqref{estimat Z}, we obtain $Z^{\prime}_s=Z_s$, $ds\otimes d\bP$, which is the uniqueness of the solution of \eqref{mf-bfsde no P}.
\qed 

\begin{remark} $ \,$
\begin{itemize}
\item[(i)] Conditions \eqref{coef condi} and \eqref{coef condi-2} in Theorems \ref{T123} and \ref{T123-2} are only sufficient conditions. Whether or not they are necessary does not seem an easy task. However, as it is well known, the existence of a solution for the MF-BFSDEs \eqref{mf-bfsde} and \eqref{mf-bfsde no P} depends on several parameters including the length $T$ of the time horizon and the initial value $x$ of the forward SDE (see Example \eqref{FBSDE of example} below). 

\item[(ii)] Conditions \eqref{coef condi} and \eqref{coef condi-2}  can be improved if we consider $C^\nu_f, C^\nu_\sigma, C^\nu_h$ instead of $C^{\nu}=\max\{C^\nu_f, C^\nu_\sigma, C^\nu_h \}$. 
\end{itemize}
\end{remark}

\section{The nonzero-sum mean filed game: the open-loop framework}
In this section $W=(W_t)_{t\leq T}$ is a one-dimension Brownian motion. For $i=1,\dots, m$, let ${\U}^i:=\M^{2,m_i}$, be the set of open-loop admissible controls  for the player $i$. The set ${\U}:=\Pi_{i=1,m} {\U}^i$, is called of open-loop admissble strategies for the players. In the sequel, a stochastic process $\rho=(\rho_t(\omega))_{t\leq T}$ with values in $\R^{\ell_1\times \ell_2}$ is called bounded if \begin{equation}\lb{defnorm}\|\rho\|:=\sup_{(t,\omega)\in [0,T]\times \Omega}\|\rho_t(\omega)\|<\infty.
\end{equation}
Next, for $u=(u^i)_{1\le i\le m}\in \U$, let $X^u:=(X^u_t)_{0\le t\leq T}$ be the $\R^n$-valued process solution of the following standard SDE of mean-field or McKean-Vlasov type.
\begin{equation}
X^u_t=x+\int_0^t\{A_sX^u_s+\sum_{k=1,m}C^k_su^k_s+D_s \E[X^u_s]+\beta_s\}ds+\int_0^t\{\sigma_s X^u_s+\alpha_s\}dW_s,
\end{equation}
where,
\begin{enumerate}
\item[(i)] $A=(A_t)_{0\le t\leq T}$, $D=(D_t)_{0\le t\leq T}$, $\beta=(\beta_t)_{0\le t\leq T}$, $\alpha=(\alpha_t)_{0\le t\leq T}$ and $C^k=(C^k_t)_{0\le t\leq T}$ are bounded and adapted stochastic processes with values respectively in 
$\R^{n\times n}$, $\R^{n\times n}$, $\R^{n}$, $\R^{n}$ and $\R^{n\times m_k}$, $k=1,\ldots,m$.  

\item[(ii)] $\sigma=(\sigma_t)_{0\le t\leq T}$ is an adapted process with values in $\R^{n\times n}$. 
\end{enumerate}
\medskip

Next, to $u=(u^i)_{1\le i\le m}\in \U$, we associate $m$ payoffs $J_i(u)$, $i=1,\ldots,m,$ of the form
\begin{eqnarray*}\begin{array}{lll}
J_i(u):=\frac{1}{2}\{\E[(X^u_T)^\top Q^iX^u_T]+\E[(X^u_T)^\top]R^i \E[X^u_T] \\\\ \qquad\qquad\qquad\qquad\qquad +\E[\int_0^T\{(X^u_s)^{\top} M^i_sX^u_s+ u_s^{\top}N_s^iu_s+\E[X^u_s]^{\top} \Gamma^i_s\E[X^u_s]\}ds]\},
\end{array}
\end{eqnarray*}
where, for any $i=1,\ldots,m$,
\begin{itemize}

\item[(a)] $M^i=(M^i_t)_{0\le t\leq T}$ are bounded adapted symmetric non-negative matrices with values in $\R^{n\times n}$,  

\item[(b)] $N^i=(N^i_t)_{0\le t\leq T}$ are bounded adapted symmetric positive matrices with values $\R^{m_i\times m_i}$. Moreover, their inverses  
$(N^i)^{-1}:=((N^i_t)^{-1})_{0\le t\leq T}$ are also bounded,

\item[(c)] $(\Gamma^i_t)_{0\le t\le T}$ are bounded deterministic symmetric non-negative  matrices with values in $\R^{n\times n}$, 

\item[(d)] $Q^i$  is a random bounded symmetric non-negative matrix  $\cF_T$-measurable and $R^i$ is a constant symmetric non-negative matrix, with values in $\R^{n\times n}$.
\end{itemize} 
\bs

\noindent For $i=1,\ldots,m$, $J_i(u)$ is the cost associated with the player $i$ when the collective strategy $u=(u^i)_{1\le i\le m}$ is implemented. The problem we address in this section is to find a Nash equilibrium point (NEP) for the game, i.e., a collective control $u^*=(u_1^*,\ldots,u^*_n)$ for the players such that 
for any $i=1,\ldots,m$, 
\begin{equation}
\lb{ineqjeu}J_i(u_1^*,\ldots,u_m^*)\leq J_i(u_1^*,\ldots,u_{i-1}^*,u_i, u_{i+1}^*,\ldots,u_m^*), \,\,\mbox{for all}\,\, u_i\in \U^i.
\end{equation}
The meaning of the previous inequalities is that if the player $i$ makes the decision to deviate unilaterally from $u_i^*$, then she is penalized since her cost is at least larger than the cost of using $u_i^*$. If $m=2$ and $J_1+J_2=0$, the game is called of zero-sum type and a NEP $(u^{*}_1, u^{*}_2)$ satisfies
$$
J_1(u^{*}_1,v_2)\leq  J_1(u^{*}_1, u^{*}_2)\leq J_1(v_1,u^{*}_2)
$$for all $v_1\in {\U}^1$ and 
$v_2\in {\U}^2$. 

For the sake of simplicity, in this section we only consider the case where the Brownian motion is one-dimensional. Extension to the multi-dimensional case is straightforward.

For $i=1,\ldots,m$, let us denote by $H_i$  the Hamiltonian associated with the $i$-th player which is defined by
\begin{equation*}\begin{array}{lll}
H_i(t,\omega,x,u_1,\ldots,u_m,\zeta,p_i,q_i):=p_i^{\top}(A_t(\omega)x+\sum_{k=1}^m C^k_t(\omega)u^k+D_t\zeta +\beta_t)\\ \\ \qquad\qquad\qquad\qquad  +\frac{1}{2}(x^{\top}M^i_t(\omega)x+u^{\top}_i N^i_t(\omega)u_i+\zeta^{\top} \G^i_t \zeta)+ (\sigma_t^\top x+\alpha_t)q_t^i ,
\end{array}
\end{equation*}
where $u^i\in \R^{m_i}$, $z^i\in \R^n$ and $\zeta \in \R^n$ ($\zeta$ is the variable which stands for the expectation). 

For $i=1,\ldots,m$, let $\tilde u^i$ be the functions defined by
\begin{equation}\label{u-opti}
\tilde u^i(t,\omega,p^i):=-(N^i_t)^{-1}(C^i_t)^\top p_i, \quad 0\le t\leq T.
\end{equation}
The measurable functions $\tilde u^i,\,\, i=1,\ldots,m$, satisfy for all $i=1,\ldots,m$ and all $u^i\in \R^{m_i}$
$$
\begin{array}{lll}
H_i(t,\omega,x,(\tilde u^j(t,\omega,p^j))_{1\le j\le m},\zeta,p_i,q_i)\\\\ \quad \leq H_i(t,\omega,x,\tilde u^1(t,\omega,p^1),\ldots,\tilde u^{i-1}(t,\omega,p^{i-1}),u^i,\tilde u^{i+1}(t,\omega,p^{i+1}),\ldots,\tilde u^{m}(t,\omega,z^{m}),\zeta,p_i,q_i).
\end{array}
$$

The following proposition is a first step toward the proof of existence of a NEP for the game.
\begin{proposition}\label{prop33}
Let the $\cP-$measurable processes $(X,(p^1,q^1),\ldots,(p^m,q^m))$ be such that $X,p^i, i=1,\dots, m$ belong to $\mathcal{S}^{2,n}$ and $q^i, i=1,\dots, m$ belong to $\mathcal{H}^{2,n}$. Then, they solve the following MF-BFSDE, for all $0\le t\leq T$,
\begin{eqnarray}\label{eqbfsde}\left\{\begin{array}{l}
X_t=x+\int_0^t\{A_sX_s+\sum_{k=1}^mC^k_s\tilde u^k(s,p^k_s)+D_s\E[X_s]+\beta_s\}ds\\ \\
\qquad\qquad +\int_0^t\{\sigma_s X^u_s+\alpha_s\}dW_s;\\ \\

p^i_t=(Q^iX_T+R^i\E[X_T])+\int_t^T\{A^\top_sp^i_s+M^i_sX_s+\E[D^\top_sp^i_s]\\ \\ \qquad\qquad\qquad+\G^i_s  \E[X_s]+\sigma_s^\top q^i_s\}ds-\int_t^Tq^i_sdW_s,\,\, i=1,\ldots, m.
\end{array}
\right.
\end{eqnarray}
if and only if the admissible collective control $\tilde u:=(\tilde u^j)_{1\le j\le m}=((\tilde u^{j}(t,\omega,p^{j}_t))_{0\le t\leq T})_{1\le j\le m}$ ($\tilde u^{j}$ is given by \eqref{u-opti}) is a Nash equilibrium point for the mean-field nonzero-sum linear quadratic stochastic differential game.
\end{proposition}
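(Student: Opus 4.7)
The statement is an equivalence between the system \eqref{eqbfsde} and the Nash equilibrium property of the feedback profile $\tilde u=(\tilde u^{j}(\cdot,p^{j}_{\cdot}))_{1\le j\le m}$. The plan is to treat each direction as an application of the mean-field stochastic maximum principle of \cite{AD}, applied player by player after freezing the strategies of the other players. The linear-quadratic structure makes the Hamiltonian $H_i$ convex in $(x,u^i,\zeta)$, so both the necessary and the sufficient SMPs are available.

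\emph{Necessity.} Assume $\tilde u$ is a NEP. Fix $i\in\{1,\dots,m\}$ and freeze $\tilde u^j$ for $j\neq i$. Then $\tilde u^i$ minimizes
$v\mapsto J_i(\tilde u^1,\dots,\tilde u^{i-1},v,\tilde u^{i+1},\dots,\tilde u^m)$ over $\U^i$. The controlled state is the McKean--Vlasov linear SDE in the first line of \eqref{eqbfsde}, and $J_i$ has the quadratic mean-field form given in the introduction. The necessary mean-field SMP of \cite{AD} then provides an adjoint pair $(p^i,q^i)\in\mathcal{S}^{2,n}\times\mathcal{H}^{2,n}$ satisfying the BSDE written in the second line of \eqref{eqbfsde}; the terms $\E[D^\top_s p^i_s]$, $\G^i_s\E[X_s]$ and the boundary contribution $R^i\E[X_T]$ all arise from differentiating the drift, the running cost and the terminal cost in the $\zeta=\E[X]$ variable. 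The first-order minimality of the Hamiltonian in $u^i$ reads $N^i_t\tilde u^i_t+(C^i_t)^\top p^i_t=0$, which is precisely \eqref{u-opti}. Reassembling the forward SDE for $X$ with $\tilde u^k(s,p^k_s)$ substituted, together with the $m$ adjoint BSDEs, yields \eqref{eqbfsde}.

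\emph{Sufficiency.} Assume $(X,(p^i,q^i)_{1\le i\le m})$ solves \eqref{eqbfsde}. Fix $i$ and $v\in\U^i$, and compare $\tilde u$ with the profile obtained by replacing $\tilde u^i$ by $v$. Since $M^i,\G^i,Q^i,R^i$ are positive semidefinite and $N^i$ is positive definite, the Hamiltonian $H_i(t,\cdot,\tilde u^{1}_t,\dots,\cdot,\dots,\tilde u^{m}_t,\cdot,p^i_t,q^i_t)$ is jointly convex in $(x,u^i,\zeta)$, and by \eqref{u-opti} the map $u^i\mapsto H_i(t,X_t,\tilde u^{1}_t,\dots,u^i,\dots,\tilde u^{m}_t,\E[X_t],p^i_t,q^i_t)$ attains its minimum at $u^i=\tilde u^i_t$. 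The sufficient version of the mean-field SMP of \cite{AD} then gives
\[
J_i(\tilde u^1,\dots,\tilde u^{i-1},v,\tilde u^{i+1},\dots,\tilde u^m)\ge J_i(\tilde u^1,\dots,\tilde u^m).
\]
Since $i$ and $v\in\U^i$ were arbitrary, \eqref{ineqjeu} holds and $\tilde u$ is a NEP.

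\emph{Main obstacle.} The delicate point is to justify the player-by-player reduction: once we freeze $\tilde u^{j}$ for $j\neq i$, these are themselves functions of the adjoint processes $p^j$, and through the coupled system \eqref{eqbfsde} they depend on the full solution. The boundedness of $(N^j)^{-1}$ and $C^j$ together with $p^j\in\mathcal{S}^{2,n}$ ensure $\tilde u^j(\cdot,p^j_\cdot)\in\mathcal{M}^{2,m_j}=\U^j$, so each frozen profile is an admissible exogenous input and the mean-field SMP of \cite{AD} applies directly. The rest is bookkeeping: identifying the Hamiltonian of the $i$-th single-player problem with the restriction of $H_i$ at fixed $(\tilde u^j)_{j\neq i}$, and matching term by term the adjoint BSDE produced by \cite{AD} with the $i$-th backward equation in \eqref{eqbfsde}.
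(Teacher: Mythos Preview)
Your proposal is correct, and on the \emph{necessity} side it matches the paper's argument: freeze the other players, apply the necessary mean-field SMP of \cite{AD} to obtain the adjoint BSDE for $(p^i,q^i)$ and the first-order condition $N^i_t\tilde u^i_t+(C^i_t)^\top p^i_t=0$, then reassemble. The only cosmetic difference is that the paper first \emph{writes down} the adjoint BSDE (using \cite{buckdahnlipeng} for its well-posedness) and then invokes \cite{AD} for the first-order condition, whereas you let \cite{AD} produce both at once.

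On the \emph{sufficiency} side you take a genuinely different route. You invoke the sufficient mean-field SMP of \cite{AD} as a black box, relying on the convexity of $(x,u^i,\zeta)\mapsto H_i$ and of the terminal cost induced by $Q^i\succeq 0$, $R^i\succeq 0$, together with the pointwise Hamiltonian minimisation encoded in \eqref{u-opti}. The paper instead carries out the verification by hand: it expands $J_1(\hat u)-J_1(\tilde u)$, uses the elementary inequality $\theta_1^\top\Sigma\theta_1-\theta_2^\top\Sigma\theta_2\ge 2(\theta_1-\theta_2)^\top\Sigma\theta_2$ for $\Sigma\succeq 0$, applies It\^o's formula to $(X^{\hat u}-X)^\top p^1$, and checks that the resulting lower bound vanishes because $(C^1_s)^\top p^1_s+N^1_s\tilde u^1_s=0$. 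Your approach is shorter and conceptually clean; the paper's explicit duality computation is self-contained and shows precisely how the cancellation occurs without needing to verify the abstract convexity hypotheses of \cite{AD}. Either is fine here because the LQ structure makes those hypotheses immediate.
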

In the BFSDE \eqref{eqbfsde}, $X$ is the optimal trajectory and  $(p_i,q_i)_{1\le i\le m}$ are the associated adjoint processes (\cite{AD, cadelinaskaratzas95, bensoussan81}).

\begin{proof}
\noindent (i) \underline{The condition is sufficient}. The fact that $\tilde u$ is an open-loop strategy for the players is an immediate consequence of the boundedness of $C^i_t$, $(N^i_t)^{-1}$ and the fact that $(p^i_t)_{0\le t\le T}$ belongs to $\M^{2,n}$ for any $i=1,\ldots,m$. Next, we will show the inequality \eqref{ineqjeu} for $i=1$. The other cases can be treated in the same manner. Consider $u_1=(u_1(s))_{0\le s\leq T}\in \U^{1}$, $\hat u=(u_1,\tilde u_2,\ldots,\tilde u_m)$. We should show that $J_1(\tilde u)\leq J_1(\hat u)$. 

\noindent Indeed,
\begin{align*}&J_1(\hat u)-J_1(\tilde u)=J_1(u_1,\tilde u_2,\ldots,\tilde u_m)-
J_1(\tilde u)=\\&
\frac{1}{2}\{\E[(X^{\hat u}_T)^{\top} Q^1X^{\hat u}_T]+\E[(X^{\hat u}_T)^{\top}]R^1 \E[X^{\hat u}_T]]-\E[(X_T)^{\top} Q^1 X_T+\E[(X_T)^{\top}]R^1 \E[X_T]]\}\\&
\qd +\frac{1}{2}\E[\int_0^T\{(X^{\hat u}_s)^{\top}M^1_sX^{\hat u}_s+u_1(s)^{\top}N_s^1u_1(s)-X_s^{\top}M^1_sX_s-\tilde u_1(s)^{\top}N_s^1.u_1(s)\\&\qd+\E[X^{\hat u}_s]^{\top}\Gamma^1_s\E[X^{\hat u}_s]-\E[X_s]^{\top} \Gamma^1_s\E[X_s]\}ds].
\end{align*}
But, for any  symmetric non-negative matrix $\Sigma\,$ (i.e. $v^{\top} \Sigma v\ge 0,\,\,\fr v\in \R^k$), we have   
$$
\begin{array}{ll}\theta_1^{\top}\Sigma\theta_1-\theta_2^{\top}\Sigma\theta_2&=
(\theta_1-\theta_2)^{\top}\Sigma(\theta_1-\theta_2)+2(\theta_1-\theta_2)^{\top}\Sigma
\theta_2\geq 2(\theta_1-\theta_2)^{\top}\Sigma
\theta_2.\end{array}
$$
Therefore, 
\begin{align}\lb{eqju}\nn J_1(\hat u)-J_1(\tilde u)\ge & \E\{(X^{\hat u}_T-X_T)^{\top} Q^1X_T\}+\E[(X^{\hat u}_T-X_T)^{\top}]R^1 \E[X_T]\\&\nn
\qd +\E\{\int_0^T\{(X^{\hat u}_s-X_s)^{\top}M^1_s.X_s+(u_1(s)-\tilde u_1(s))^{\top}N_s^1\tilde u_1(s)\\&\qd+(\E[X^{\hat u}_s]-\E[X_s])^{\top} \Gamma^1_s\E[X_s]\}ds\}.
\end{align}
since the matrices $Q^1$, $R^1$, $M^1_t$, $N^1_t$ and $\G^1_t$ are symmetric non-negative. 

We will show that the right-hand side of \eqref{eqju} is zero. Indeed, since $p^1_T=Q^1X_T+R^1\E[X_T]$ and $(p^1,q^1)$ is a solution of a backward SDE of mean-field type, then by It\^o's formula we have
\begin{eqnarray}\lb{dualite}\begin{array}{lll}
(X^{\hat u}_T-X_T)^{\top} p^1_T=\int_0^T\{-(X^{\hat u}_s-X_s)^{\top} \{A^\top_sp^1_s+M^1_s X_s+\E[D_s^{\top} p^1_s]+\G^1_s  \E[X_s]+\sigma_s^{\top} q^1_s\}\\ \\ \qquad\qquad +
(X^{\hat u}_s-X_s)^\top A^{\top}_sp^1_s+(u^1(s)-\tilde u^1(s))^{\top} (C^1_s)^{\top} p_s^1+\E[X^{\hat u}_s-X_s]^{\top} D_s^{\top} p_s^1\}ds\\ \\ \qquad\qquad
+\int_0^t
(X^{\hat u}_s-X_s)^{\top}\sigma_s^{\top} q^1_sds +
\int_0^t(X^{\hat u}_s-X_s)^{\top}\sigma_s^{\top} p^1_s dW_s+\\  \\ \qquad\qquad
\int_0^t(X^{\hat u}_s-X_s)^{\top} q^1_s dW_s,
\end{array}
\end{eqnarray}
since, for any $0\le t\leq T$, 
\begin{align}\nn
\begin{array}{l}X^{\hat u}_t-X_t=\int_0^t\{A_s(X^{\hat u}_s-X_s)+C^1_s(u^1(s)-\tilde u^1(s))+D_s\E[X^{\hat u}_s-X_s]\}ds+\int_0^t\sigma_s (X^{\hat u}_s-X_s)dW_s.\end{array}
\end{align}
Simplifying terms in \eqref{dualite} and taking expectation, noting that the stochastic integrals are martingales, we obtain 
\begin{eqnarray}\lb{dualite2}\begin{array}{ll}
\E[(X^{\hat u}_T-X_T)^\top p^1_T]=
\E\{(X^{\hat u}_T-X_T)^\top Q^1X_T]+\E[(X^{\hat u}_T-X_T)^\top]R^1 \E[X_T]\}\\
\qquad\qquad =\E[\int_0^T\{-(X^{\hat u}_s-X_s)^\top \{M^1_s X_s+\G^1_s  \E[X_s]\}+
(u^1(s)-\tilde u^1(s))^\top(C^1_s)^\top p^1_s\}ds].
\end{array}
\end{eqnarray}
Finally, insert the right-hand side of \eqref{dualite2} in \eqref{eqju} and take into account that $$
(C^1_s)^\top p^1_s+N_s^1\tilde u^1(s)=0
$$
(see the definition of $\tilde u^1$ given by \eqref{u-opti} ) to obtain that 
$$
J_1(u_1,\tilde u_2,\ldots,\tilde u_m)-J_1(\tilde u)\geq 0.
$$ 
\noindent (ii) \underline{The condition is necessary}.
Suppose the game has a Nash equilibrium point $\tilde u:=(\tilde u^j)_{1\le j\le m}=((\tilde u^{j}(t,\omega,p^{j}_t))_{0\le t\leq T})_{1\le j\le m}$ and denote by $\tilde X $ its associated optimal trajectory. Then obviously $\tilde X$ belongs to $\mathcal{S}^{2,n}$. Next, for $i=1,\ldots,m$, let $(p^i, q^i)$ be the solution of the following backward SDE:
\begin{equation}\label{proof of sf of mfbsde and ad eq}
\left\{\begin{aligned}
&p^i \in \mathcal{S}^{2,n} \mbox{ and }q^i\in \mathcal{H}^{2,n} ; \\
&p^i_t=(Q^i\tilde X_T+R^i\E[\tilde X_T])+\int_t^T\{A^\top_sp^i_s+M^i_s\tilde X_s+\E[D^\top_sp^i_s]\\ &\qquad\qquad +\G^i_s  \E[\tilde X_s]+\sigma_s^\top q^i_s\}ds-\int_t^Tq^i_sdW_s,\quad t\le T.
\end{aligned}\right.
\end{equation}The solution of \eqref{proof of sf of mfbsde and ad eq} exists, by the results in \cite{buckdahnlipeng}. Next, by the maximum principle (see \cite{AD}, Theorem 3.1),  for any $u:=( u^j)_{1\le j\le m} \in \mathcal{U}$, we have
\begin{equation}
\begin{aligned}
&\frac{d}{du^i} H_i(t,\omega,\tilde X_t,\tilde u_1,\ldots,\tilde u_{i-1},u_i,\tilde u_{i+1},\ldots,u_m,\E[\tilde X_t],p_i,q_i)(u_i - \tilde u_i) \geq 0\ \ \ \mathbb{P}\mbox{-a.s.} 
\end{aligned}
\end{equation}
for all \  $t \in [0, T]$, $i=1,\ldots, m$. That is, for all $i=1,\ldots, m$,
\begin{equation}
\begin{aligned}
((C_t^i)^\top p_t^i+ N_t^i \tilde u_t^i)(u_t^i-\tilde u_t^i) \geq 0.
\end{aligned}
\end{equation}
Since $u_t^i \in \R^{m_i}$ is arbitrary, we obtain 
$$\tilde u^i(t,\omega,p^i):=-(N^i_t)^{-1}(C^i_t)^\top p_i, \quad 0\le t\leq T. $$
Inserting that value of $\tilde{u}^i$ into \eqref{proof of sf of mfbsde and ad eq}, we have that $(\tilde X,(p^1,q^1),\ldots,(p^m,q^m))$ satisfies MF-BFSDE \eqref{eqbfsde}.
\end{proof}

Next, we are going to provide conditions on the data of the differential game in such a way that a NEP exists. So let us consider the following assumptions:

\begin{equation*}
(H2)
\left\{ \begin{array}{lll}
\text{(i) For any $i=1,\ldots,m$, the matrices $C^i$ and $N^i$ are time independent. We set } \\  \qquad \text{$K^i:=C^i(N^i)^{-1}(C^i)^\top$.}\\\\
\text{(ii) There exist constants $\eta_1>0$, $\eta_2>0$ such that for any $x\in \R^n$, }\\ \text{$\qquad x^\top (\sum_{i=1}^mK^iQ^i)x\ge \eta_1|x|^2$ and $x^\top (\sum_{i=1}^m K^iM_t^i)x\ge \eta_2|x|^2$,} \\ \text{for any $0\le t\leq T$, $\bP$-a.s.  
}\\\\

\text{(iii) For any $i=1,\ldots,m$, $K^iA_t^\top=A_t^\top K^i$, $K^iD^\top_t=
D^\top_tK^i$ and $K^i\sigma^\top_t=\sigma_t^\top K^i$},
\\ \text{for any $0\le t\leq T$, $\bP$-a.s. }
\end{array}
\right.
\end{equation*}

Note that in the case when  $n=1$, those assumptions are rather easy to check. 
\ms

Let us now consider the following MF-BFSDE.

\begin{eqnarray}\label{eqbfsde2}\left\{\begin{array}{l}
X, \tilde Y \in \mathcal{S}^{2,n} \mbox{ and }Z\in \mathcal{S}^{2,n}\,;\\\\
X_t=x+\int_0^t\{A_sX_s-\tilde Y_s+D_s\E[X_s]+\beta_s\}ds+\int_0^t\{\sigma_s X^u_s+\alpha_s\}dW_s\,,\\
\\
\tilde Y_t=(\sum_{k=1}^m K^iQ^i)X_T+
(\sum_{k=1}^m K^iR^i)\E[X_T]-\\\\\qquad \int_t^T\{-A^\top_s\tilde Y_s-(\sum_{k=1,m}K^iM^i_s)X_s-\E[D_s^\top\tilde Y_s]-\sigma_s^\top \tilde Z_s\}ds-\int_t^T\tilde Z_sdW_s.
\end{array}\right.
\end{eqnarray}

Note that  if $(X,(p^1,q^1),\ldots,(p^m,q^m))$ is a solution of \eqref{eqbfsde} then, under (H2), the process $(X,Y=\sum_{i=1}^m K^ip^i,Z=\sum_{i=1}^mK^iq^i)$ is a solution of the BFSDE \eqref{eqbfsde2}. This is exactly the origin of \eqref{eqbfsde2}.
\bs 

The functions $f$, $g$, $h$ and $\s$, introduced in Section 1, and related to the BFSDEs \eqref{eqbfsde2} are
\begin{itemize}

\item[(a)] $f(t,x,y,z,\nu)=A_tx-y+D_t\int_{\R^{n+n}}x\nu(dx,dy)+\beta_t$;
 
\item[(b)] $\s(t,x,y,z,\nu)=\s_t x+\alpha_t$ ; 

\item[(c)] $g(x,\mu)=(\sum_{k=1}^mK^iQ^i)x+
(\sum_{k=1}^mK^iR^i)\int_{\R^n}x\mu(dx).$

\item[(d)] For any $t,x,y,z,\nu$, if $(\xi_1,\xi_2)$ is a random vector on $(\Omega,\F,\bP)$ whose law is $\nu$, then $$h(t,x,y,z,\nu):=-A_t^\top y-(\sum_{k=1}^mK^iM^i_s)x-\E[D_t\xi_2]-\s^\top z.$$
\end{itemize}
To proceed, let us show that $f$ is uniformly Lipschitz w.r.t. $\nu$. Let $\xi=(\xi_1, \xi_2)$ be a random vector whose distribution is $\nu \in \bM_2(\R^{n+n})$. We have, $$
f(t,x,y,z,\nu)=A_tx-y+D_t\E[\xi_1]+\beta_t.$$ Next, let $\nu'\in \bM_2(\R^{n+n})$ be given and let $\xi'=(\xi'_1, \xi'_2)$ be a pair of random variables defined on the same probability as $(\xi_1, \xi_2)$ whose law is $\nu'$.  Therefore, 
\begin{align}
\nn |f(t,x,y,z,\nu)-f(t,x,y,z,\nu')|&\le \|D\||\E[\xi_1-\xi'_1]|\\&\nn\leq \|D\|\sqrt{\E[|\xi_1-\xi'_1|^2]}\\\nn&\leq \|D\|\sqrt{\E[|\xi-\xi'|^2]}.
\end{align}
Since $\xi$ and $\xi'$ are arbitrary, it holds that  
\begin{align}\lb{flipnu}
|f(t,x,y,z,\nu)-f(t,x,y,z,\nu')|\le \|D\|\inf_{\xi,\xi'}\sqrt{\E[|\xi-\xi'|^2]}=\|D\|d(\nu,\nu').
\end{align}
Finally, linearity implies that $f$ satisfies \eqref{lipschitz}. Similar estimates can be used for $h$ and $g$ to show that they satisfy \eqref{lipschitz} and  \eqref{lipschitzg}, respectively. 
\ms

\nd The operator $\mathcal{A}$ defined in \eqref{operateur} reads
\begin{equation}\lb{operateur2}
\begin{array}{lll}
\mathcal{A}(t,u,u^{\prime},\nu)=-|y-y'|^2-(\sum_{k=1}^mK^iM^i_s)|x-x'|^2. 
\end{array}
\end{equation}
Therefore, under (H2), $\mathcal{A}$ and $g$ satisfy Assumption (H$1^{\prime}$) with $k= min\{1, \eta_2\}$, $k^{\prime}= \eta_1$ ($\eta_1$ and $\eta_2$ are defined in (H2)).
\ms

\noindent Next, the Lipschitz constants of  $f$, $g$, $h$ and $\s$ w.r.t. $x,y$ and $z$ are (see \eqref{defnorm}) 
\begin{align}\lb{diversconstantes}
&C_f^x=\|A\|, \,\, C_f^y=1, \,\, C_f^z=0, \,\, C_h^y=\|A\|, \,\, C_h^x=\|(\sum_{k=1}^mK^iM^i)\|, \,\, C_h^z=\|\s\|,\\& C_\s^x=\|\s\|, \,\,C_\s^y=C_\s^z=0\text{ and }C_g^x=\|(\sum_{k=1}^mK^iQ^i)\|.
\end{align}
On the other hand, as for $f$ in \eqref{flipnu}, 
\begin{align}\lb{diversconstantes2}
C_f^\nu=\|D\|, C_h^\nu=\|D\|, C_\s^\nu=0 \text{ and }
C_g^\nu=\|(\sum_{k=1}^mK^iR^i)\|.
\end{align}
\noindent We have the following  
\begin{proposition}\label{Existence and uniqueness solution of game} Assume that (H2) holds and
\begin{equation}
\begin{aligned}
&(i)\qquad\|(\sum_{k=1}^mK^iR^i)\| < min\{2(\sqrt{2}-1)\eta_1,\frac{\sqrt{2}}{2},\frac{\sqrt{2}}{2}\eta_2\};\\
&(ii)\qquad  \|D\| < min\{2(\sqrt{2}-1)\eta_1,\frac{\sqrt{2}}{2},\frac{\sqrt{2}}{2}\eta_2\}.
\end{aligned}
\end{equation}
Then, there exist $\cP-$measurable processes 
$(X,(p^1,q^1),...,(p^m,q^m))$ such that $X$ and $p^i, i=1,\dots, m$ belong to $\mathcal{S}^{2,n}$ and $q^i, i=1,\dots, m$ belong to $\mathcal{H}^{2,n}$ which solve the Backward-Forward stochastic differential equation of mean-field type \eqref{eqbfsde}.
\end{proposition}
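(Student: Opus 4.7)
The plan is to reduce the $m$-body system \eqref{eqbfsde} to the aggregated auxiliary MF-BFSDE \eqref{eqbfsde2}, invoke Theorem \ref{T123-2} on that smaller system, and finally reconstruct the individual adjoint pairs $(p^i,q^i)$ by solving $m$ independent linear mean-field BSDEs driven by the forward process produced in the first step.

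First I would verify that the coefficients $(f,h,g,\sigma)$ associated with \eqref{eqbfsde2}, as spelled out in (a)--(d) immediately after the display, meet the hypotheses of Theorem \ref{T123-2}. Crucially, $\sigma(t,x,y,z)=\sigma_t x+\alpha_t$ is independent of the law $\nu$, so we are exactly in the framework covered by that theorem. Formula \eqref{operateur2} combined with (H2)(ii) gives assumption $(H1^{\prime})(i)$ with $k=\min\{1,\eta_2\}$; a direct computation from the definition of $g$ and the second half of (H2)(ii) yields $(H1^{\prime})(ii)$ with $k^{\prime}=\eta_1$. The $\nu$-Lipschitz constants of these coefficients, computed in \eqref{diversconstantes2}, simplify to $C^{\nu}=\|D\|$ and $C_g^{\nu}=\|\sum_k K^iR^i\|$. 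The hypotheses (i)--(ii) of the proposition say exactly that these quantities stay strictly below $\min\{2(\sqrt{2}-1)\eta_1,\tfrac{\sqrt{2}}{2}\min\{1,\eta_2\}\}$, which is the bound \eqref{coef condi-2}. Theorem \ref{T123-2} therefore delivers a unique triple $(X,\tilde Y,\tilde Z)\in\sdm\times\sdm\times\hdu$ solving \eqref{eqbfsde2}.

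With $X$ now fixed, for each $i=1,\ldots,m$ I would solve the linear mean-field BSDE
\begin{equation*}
p^i_t = Q^iX_T+R^i\E[X_T]+\int_t^T\bigl\{A_s^\top p^i_s+\E[D_s^\top p^i_s]+\sigma_s^\top q^i_s+M^i_sX_s+\Gamma^i_s\E[X_s]\bigr\}ds-\int_t^T q^i_sdW_s.
\end{equation*}
The driver is linear and Lipschitz in $(p^i,q^i)$, the mean-field contribution $\E[D_s^\top p^i_s]$ is handled by the general theory of \cite{buckdahnlipeng}, and $X$ is square-integrable, so this BSDE admits a unique solution $(p^i,q^i)\in\mathcal{S}^{2,n}\times\mathcal{H}^{2,n}$. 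The final step is to check that the candidate $(X,(p^i,q^i)_{1\le i\le m})$ really satisfies the forward equation of \eqref{eqbfsde}. Since $\sum_i C^i\tilde u^i(s,p^i_s)=-\sum_i K^ip^i_s$, it suffices to show that $Y:=\sum_i K^ip^i$ equals $\tilde Y$. Multiply the $i$-th BSDE by $K^i$ on the left, sum over $i$ and use the commutativity relations in (H2)(iii), namely $K^iA^\top=A^\top K^i$, $K^iD^\top=D^\top K^i$ and $K^i\sigma^\top=\sigma^\top K^i$, to pull each $K^i$ through the operators (and, by constancy of $K^i$, through the expectations). The resulting backward equation is precisely the backward part of \eqref{eqbfsde2} applied to the triple $(X,Y,Z)$ with $Z:=\sum_i K^iq^i$. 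Hence $(X,Y,Z)$ is another solution of \eqref{eqbfsde2}, and the uniqueness statement of Theorem \ref{T123-2} forces $Y=\tilde Y$ and $Z=\tilde Z$.

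The step I expect to be the main obstacle is the final reconciliation: establishing that the aggregated process $\sum_i K^ip^i$ coincides with the $\tilde Y$ produced by Theorem \ref{T123-2}. It is here that the commutativity part of (H2)(iii) is essential, since without it one cannot interchange each $K^i$ with the coefficients $A_s^\top$, $D_s^\top$ and $\sigma_s^\top$ appearing in the driver. Once the commutation is carried out, uniqueness for \eqref{eqbfsde2} closes the argument automatically; verification of $(H1^{\prime})$ and of the $\nu$-Lipschitz bound is routine given the explicit form of the coefficients.
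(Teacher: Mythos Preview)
Your proposal is correct and follows essentially the same route as the paper: reduce to the aggregated system \eqref{eqbfsde2}, apply Theorem \ref{T123-2} to obtain $(X,\tilde Y,\tilde Z)$, solve the $m$ linear mean-field BSDEs for $(p^i,q^i)$ via \cite{buckdahnlipeng}, and then use (H2)(iii) together with uniqueness for \eqref{eqbfsde2} to identify $\sum_i K^ip^i$ with $\tilde Y$. The only difference is that the paper, after invoking Theorem \ref{T123-2} for existence, re-proves uniqueness of \eqref{eqbfsde2} directly by applying It\^o's formula to $\Delta X^\top\Delta Y$ and exploiting the special linear-quadratic structure; this direct argument requires only $\|\sum_k K^iR^i\|<\eta_1$ rather than the full bound \eqref{coef condi-2}, but since the proposition already assumes the stronger conditions your appeal to the uniqueness half of Theorem \ref{T123-2} is equally valid.
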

\begin{proof}
Recall the BFSDE \eqref{eqbfsde2} is  
\begin{eqnarray}\label{eqbfsde3}\left\{\begin{array}{l}
X_t=x+\int_0^t\{A_sX_s-\tilde Y_s+D_s\E[X_s]+\beta_s\}ds+\int_0^t\{\sigma_s X_s+\alpha_s\}dW_s\,,\\
\\
\tilde Y_t=(\sum_{k=1,m}K^iQ^i)X_T+
(\sum_{k=1,m}K^iR^i)\E[X_T]-\\ \qquad\qquad \int_t^T\{-A^\top_s\tilde Y_s-(\sum_{k=1,m}K^iM^i_s)X_s-\E[D_s^\top\tilde Y_s]-\sigma_s^\top \tilde Z_s\}ds-\int_t^T\tilde Z_sdW_s.
\end{array}\right.
\end{eqnarray}
 When (H2) holds, 
 \begin{equation}
 \begin{aligned}
  \mathcal{A}(t,u,u^{\prime},\nu)&=-|y-y'|^2-(\sum_{k=1}^mK^iM^i_s)|x-x'|^2\\
 & \leq -|y-y'|^2-\eta_2|x-x'|^2
  \end{aligned}
 \end{equation}
 which means that $k=min\{1,\eta_2\}$.
 For any $x,x^{\prime} \in \R^n, \nu\in\bM_2(\R^{n}\times\R^n)$
  \begin{equation}
 \begin{aligned} 
 g(x,\nu)-g(x^{\prime},\nu))\cdot(x-x^{\prime})=x^\top (\sum_{i=1,m}K^iQ^i)x\ge \eta_1|x|^2
  \end{aligned}
 \end{equation}
 which means that $k^{\prime}= \eta_1$. Now, under conditions \eqref{Existence and uniqueness solution of game} we can apply Theorem \ref{T123-2}, to deduce the existence of $\cP-$measurable processes $(X,\tilde Y,\tilde Z)$ which solve the MF-BFSDE\eqref{eqbfsde3}.
 \medskip
 
We will now  prove that when (ii) is satisfied, this solution is unique without using Theorem \ref{T123-2}. This is due to the fact that in this specific case, uniqueness is obtained in an easy way without strong conditions on the Lipschitz constants of $f$, $h$, $g$ and $\s$ as it is the case in Theorem \ref{T123-2}.

Assume there is another solution $(X',Y',Z')$ of  \eqref{eqbfsde3} and set
$$
\D X=X-X', \,\, \D  Y=\tilde Y-\tilde Y'\,\, \text{ and } \,\, \D Z=\tilde Z-\tilde Z'.
$$
We have, for every  $0\le t\le T$,
\begin{align}
&\nn \D X_t=\int_0^t\{A_s\D X_s-\D  Y_s+D_s\E[\D X_s]\}ds+\int_0^t\sigma_s \D X_sdW_s
\\&\nn 
\D  Y_t=(\sum_{k=1}^mK^iQ^i)\D X_T+
(\sum_{k=1}^mK^iR^i)\E[\D X_T]-\\&\nn \qquad\qquad \int_t^T\{-A^\top_s\D  Y_s-(\sum_{k=1}^mK^iM^i_s)\D X_s-\E[D_s^\top \D Y_s]-\sigma_s^\top  \D Z_s\}ds-\int_t^T \D Z_sdW_s.
\end{align}
Next, applying It\^o's formula to $\D X^\top\D Y$ and taking expectation we obtain
\begin{align}
\E[\D X_T^\top \D Y_T]&=\E\{
\D X_T^\top(\sum_{k=1}^mK^iQ^i)\D X_T+
\E[\D X_T]^\top(\sum_{k=1}^mK^iR^i)\E[\D X_T]\} \nn \\&=\E[\int_0^T\{-|\D Y_s|^2-
\D X_s^\top(\sum_{k=1}^mK^iM^i_s)\D X_s\}ds].
\end{align}
This gives
\begin{align*}
\E[
\eta_1\D X_T^\top\D X_T]-C^\nu_g
\E[\D X_T]^\top\E[\D X_T]\leq \E[\int_0^T\{-|\D Y_s|^2-
\D X_s^\top(\sum_{k=1}^mK^iM^i_s)\D X_s\}ds].
\end{align*}
Thus, since $C^\nu_g=\|(\sum_{k=1}^mK^iR^i)\|< \eta_1$, by continuity of the processes we obtain
$$
\bP\mbox{-a.s.}, \,\,\, \fr t\leq T, \,\,\, X_t=X'_t \,\,\tx { and }\,\, Y_t=Y'_t
$$
and finally $Z'_t=Z_t$, $dt\otimes d\bP$-a.e. Thus, the solution of \eqref{eqbfsde3} is unique. 

Next, by the results of \cite{buckdahnlipeng}, for $i=1,\ldots,m$, there exists $(p^i,q^i)\in \mathcal{S}^{2,n}\times \mathcal{H}^{2,n}$ solution of the following standard BSDE: $\bP$-a.s., $\forall t\leq T$,
$$\begin{array}{l}
p^i_t=Q^iX_T+R^i\E[X_T]+\int_t^T\{A^\top_sp^i_s+M^i_sX_s+\E[D^\top_sp^i_s]+\sigma_s^\top q^i_s\}ds-\int_t^Tq^i_sdW_s.
\end{array}$$
Therefore, the process $(X,Y=\sum_{i=1}^mK^ip^i,Z=\sum_{i=1}^mK^iq^i)$ is a solution of \eqref{eqbfsde3}. As the solution of this latter is unique, it holds that  $\tilde Y=\sum_{i=1}^mK^ip^i$ and $\tilde Z=\sum_{i=1}^mK^iq^i$. Replace now 
$\tilde Y$ (resp. $\tilde Z$) with $\sum_{i=1}^mK^ip^i$ (resp. $\sum_{i=1}^mK^iq^i$) in \eqref{eqbfsde2} to obtain that $(X,(p^i,q^i)_{1\le i\le m})$ satisfy the FBSDE \eqref{eqbfsde}. The proof is complete. 
\end{proof}
As an immediate consequence of Propositions \ref{prop33} and \ref{Existence and uniqueness solution of game}, we give the main result of this section.
\begin{theorem} \lb{thm35}Assume that (H2) holds and  the following conditions are satisfied:
\begin{equation}\label{condition of game}
\begin{aligned}
&(i)\,\,\|(\sum_{k=1}^mK^iR^i)\| < min\{2(\sqrt{2}-1)\eta_1,\frac{\sqrt{2}}{2},\frac{\sqrt{2}}{2}\eta_2\};\\
&(ii)\,\,\|D\| < min\{2(\sqrt{2}-1)\eta_1,\frac{\sqrt{2}}{2},\frac{\sqrt{2}}{2}\eta_2\}.
\end{aligned}
\end{equation}
Then, the collective strategy $\tilde u=((-(N^i)^{-1}(C^i)^\top p_i(t))_{t\leq T})_{1\le i\le m}$, where  $(X,(p^i,q^i)_{1\le i\le m})$ is the solution of FBSDE \eqref{eqbfsde}, is a Nash eqilibrium point for the mean-field LQ differential game.
\end{theorem}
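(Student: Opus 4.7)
The plan is to deduce the theorem by chaining together the two preceding propositions, with essentially no further analytic work to do. First I would verify that hypothesis (H2) together with conditions (i) and (ii) of \eqref{condition of game} are exactly the hypotheses of Proposition \ref{Existence and uniqueness solution of game}. Invoking that proposition then produces $\cP$-measurable processes $(X,(p^1,q^1),\ldots,(p^m,q^m))$ with $X, p^i\in\mathcal{S}^{2,n}$ and $q^i\in\mathcal{H}^{2,n}$ that solve the coupled MF-BFSDE \eqref{eqbfsde}.

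Next I would feed this solution into the sufficiency (``if'') direction of Proposition \ref{prop33}. That proposition asserts that for any such solution of \eqref{eqbfsde}, the associated collective control $\tilde u=((\tilde u^j(t,\omega,p^j_t))_{0\le t\le T})_{1\le j\le m}$ defined through the feedback formula \eqref{u-opti} is a Nash equilibrium point for the mean-field LQ game. Since (H2)(i) makes $C^i$ and $N^i$ time independent, the formula \eqref{u-opti} reduces to $\tilde u^i_t=-(N^i)^{-1}(C^i)^\top p^i_t$, which is precisely the collective strategy stated in the theorem.

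A short admissibility check then completes the argument: each $\tilde u^i$ lives in $\U^i=\M^{2,m_i}$ because $p^i\in\mathcal{S}^{2,n}\subset \M^{2,n}$ and $(N^i)^{-1}(C^i)^\top$ is bounded and deterministic by (H2)(i). This is a one-line consequence of the regularity asserted by Proposition \ref{Existence and uniqueness solution of game}.

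The genuinely substantive steps have already been discharged upstream, so I do not expect any new obstacle here: the Nash inequality \eqref{ineqjeu} relies on the duality computation in Proposition \ref{prop33} (Itô's formula applied to $(X^{\hat u}-X)^\top p^1$ together with the first-order condition $(C^1)^\top p^1+N^1\tilde u^1=0$), while solvability of \eqref{eqbfsde} is routed through the aggregated system \eqref{eqbfsde2} and Theorem \ref{T123-2} inside the proof of Proposition \ref{Existence and uniqueness solution of game}. The only thing I would be careful to flag explicitly in writing is that the roles of the two conditions in \eqref{condition of game} are different: (i) controls the coefficient $C^\nu_g=\|\sum_k K^iR^i\|$ and (ii) controls $C^\nu_f=C^\nu_h=\|D\|$, and both are needed to place \eqref{eqbfsde2} within the scope of Theorem \ref{T123-2}.
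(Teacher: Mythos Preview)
Your proposal is correct and matches the paper's approach exactly: the paper presents Theorem~\ref{thm35} as ``an immediate consequence of Propositions~\ref{prop33} and~\ref{Existence and uniqueness solution of game}'' without any further proof. Your write-up actually spells out the chaining (existence via Proposition~\ref{Existence and uniqueness solution of game}, then the sufficiency direction of Proposition~\ref{prop33}, plus the admissibility check) more carefully than the paper does.
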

\bs

\begin{example}[Nonexistence of a Nash Equilibrium Point of specific game problem without condition \eqref{condition of game}]${}$ \\

 We give an example to illustrate that when \eqref{condition of game} is not satisfied, the game may not have an equilibrium point. The idea is inspired by the conclusion shown in Section 6 of  \cite{eisele1982nonexistence} and Example (4.b) in \cite{hamadene1999nonzero}.\\ \\ Consider following game problem:
\begin{equation}\label{example}
\begin{aligned}
dX_t&=\left\{X_t-\E[X_t]+u(t)\left[\begin{array}{ccc}
1\\
-2
\end{array}\right]+v(t)\left[\begin{array}{ccc}
-2\\
1
\end{array}\right]\right\}dt+dW_t,\,\,t\leq T ;X_0=(1 ,\,2)^\top.
\end{aligned}
\end{equation}
Let $J_1$ and $J_2$ be the cost functionals defined by: $$
J_1(u,v)=\frac{1}{2}\E[\int_0^T (u(t))^2 dt +(X_T^1)^2]\mbox{ and } J_2(u,v)=\frac{1}{2}\E[\int_0^T (v(t))^2 dt +(X_T^2)^2]$$
where $u, v$ are $\R$-valued and $\F_t$- adapted process. Here, the associated $D=-1$ and then $\|D\|=1$, which does not satisfy \eqref{condition of game}-(ii).  The Mean-Field FBSDE associated  with the game is, for every $t\le T$,
\begin{equation}\label{FBSDE of example}
\left\{
\begin{aligned}
X_t&=\left[\begin{array}{ccc}
1\\
2
\end{array}\right]+\int_0^t\{X_s-\E[X_s]+\left[\begin{array}{ccc}
1\\
-2
\end{array}\right][-1\ \ 2]p^1_s+\left[\begin{array}{ccc}
-2\\
1
\end{array}\right][\ 2\ \ -1]p^2_s\}ds+W_t;\\
p^1_t&=\left[\begin{array}{ccc}
X^1_T\\
0
\end{array}\right]+\int_t^T\{p^1_s-\E[p^1_s]\}ds-\int_t^Tq^1_sdW_s;\\
p^2_t&=\left[\begin{array}{ccc}
0\\
X^2_T
\end{array}\right]+\int_t^T\{p^2_s-\E[p^2_s]\}ds-\int_t^Tq^2_sdW_s.
\end{aligned}
\right.
\end{equation}
Next, set $Y=(\E(X_t))_{t \leq T}$, $\overline{p}^i=(\E [p^i(t)])_{t\leq T}, i=1,2$ . Taking expectation in \eqref{FBSDE of example}, we obtain, for every $t\le T$, 
\begin{equation}\label{FBODE of example}
\left\{
\begin{aligned}
Y_t&=\left[\begin{array}{ccc}
1\\
2
\end{array}\right]+\int_0^t\left\{\left[\begin{array}{ccc}
1\\
-2
\end{array}\right][-1\ \ 2]\overline{p}^1_s+
\left[\begin{array}{ccc}
-2\\
1
\end{array}\right][\ 2\ \ -1]\overline{p}^2_s\right\}ds;\\\overline{p}^1_t&=\left[\begin{array}{ccc}
Y^1_T\\
0
\end{array}\right],
\overline{p}^2_t=\left[\begin{array}{ccc}
0\\
Y^2_T
\end{array}\right].
\end{aligned}
\right.
\end{equation}
which is a deterministic system. With the previous system is associated the following deterministic nonzero-sum game
\begin{equation}\label{deter game of example}
\left\{
\begin{aligned}
dY_t&=\left\{\overline{u}(t)\left[\begin{array}{ccc}
1\\
-2
\end{array}\right]+\overline{v}(t)\left[\begin{array}{ccc}
-2\\
1
\end{array}\right]\right\}dt,\,\,\,t\le T,\\
Y_0&=\left[\begin{array}{ccc}
1\\
2
\end{array}\right],
\end{aligned}
\right.
\end{equation}
and the cost functionals are given by
\begin{equation}\label{dge2}\bar J_1(\overline{u},\overline{v})=\frac{1}{2}\{\int_0^T (\overline{u}(t))^2 dt +(Y_T^1)^2\}\mbox{ and } \bar J_2(\overline{u},\overline{v})=\frac{1}{2}\{\int_0^T (\overline{v}(t))^2 dt +(Y_T^2)^2\}.\end{equation}

The problem \eqref{deter game of example}-\eqref{dge2} is a deterministic nonzero-sum game. Noting that if the game problem \eqref{example} has a Nash equilibrium point, by Proposition \ref{prop33}, the MF-BFSDE \eqref{FBSDE of example} has a solution. Hence, obviously the FBODE \eqref{FBODE of example} has a solution, which means that the  deterministic game problem \eqref{deter game of example}-\eqref{dge2} has a Nash  equilibrium point. However, when we choose $T=1$, following the conclusion in \cite{eisele1982nonexistence}, the game \eqref{deter game of example}-\eqref{dge2} does not have a Nash equilibrium point and then the equation \eqref{FBODE of example} does not have a solution. Therefore, the MF-BFSDE \eqref{FBSDE of example} does not have a solution for $T=1$, from which we deduce that the game \eqref{example} does not have a Nash equilibrium point.
\end{example}
\begin{bibdiv}
\begin{biblist}

\bib{AD}{article}{
title={A maximum principle for SDEs of mean-field type},
  author={Andersson, Daniel},
  author={Djehiche, Boualem},
  journal={Applied Mathematics \& Optimization},
  volume={63},
  number={3},
  pages={341--356},
  year={2011},
  publisher={Springer}
}
\bib{antonelli}{article}{
title={Backward-forward stochastic differential equations},
  author={Antonelli, Fabio},
  journal={The Annals of Applied Probability},
  volume={3},
  number={3},
  pages={777--793},
  year={1993},
  publisher={Institute of Mathematical Statistics}
}

\bib{bensoussan81}{article}{
title={Lectures in stochastic control},
author={Bensoussan, Alain},
journal={Proc. Cortona 1981, Lecture Notes in Mathematics},
volume={972},
pages={1--162},
year={1982},
publisher={Springer}
}

\bib{buckdahnlipeng}{article}{
  title={Mean-field backward stochastic differential equations and related partial differential equations},
  author={Buckdahn, Rainer},
  author={Li, Juan},
  author={Peng, Shige},
  journal={Stochastic Processes and their Applications},
  volume={119},
  number={10},
  pages={3133--3154},
  year={2009},
  publisher={Elsevier}
}

\bib{BDL}{article}{
title={A general stochastic maximum principle for SDEs of mean-field type},
  author={Buckdahn, Rainer},
  author={ Djehiche, Boualem}
  author={Li, Juan},
  journal={Applied Mathematics \& Optimization},
  volume={64},
  number={2},
  pages={197--216},
  year={2011},
  publisher={Springer}
}

\bib{BLM}{article}{
title={A stochastic maximum principle for general mean-field systems},
  author={Buckdahn, Rainer},
  author={Li, Juan},
  author={Ma, Jin},
  journal={Applied Mathematics \& Optimization},
  volume={74},
  number={3},
  pages={507--534},
  year={2016},
  publisher={Springer}
}

\bib{cadelinaskaratzas95}{article}{
title={The stochastic maximum principle for linear, convex optimal control with random coefficients},
  author={Cadenillas, Abel},
  author={Karatzas, Ioannis},
  journal={SIAM journal on control and optimization},
  volume={33},
  number={2},
  pages={590--624},
  year={1995},
  publisher={SIAM}
}

\bib{carmona2013}{article}{
  title={Mean field forward-backward stochastic differential equations},
  author={Carmona, Ren{\'e} },
  author={Delarue, Fran{\c{c}}ois},
  journal={Electronic Communications in Probability},
  volume={18},
  year={2013},
  publisher={The Institute of Mathematical Statistics and the Bernoulli Society}
}

\bib{carmona2015}{article}{
  title={Forward--backward stochastic differential equations and controlled McKean--Vlasov dynamics},
  author={Carmona, Ren{\'e} },
  author={Delarue, Fran{\c{c}}ois},
 journal={The Annals of Probability},
  volume={43},
  number={5},
  pages={2647--2700},
  year={2015},
  publisher={Institute of Mathematical Statistics}
}
\bib{DH}{article}{
  title={Optimal control and zero-sum stochastic differential game problems of mean-field type},
  author={ Djehiche, Boualem  }
  author={ Hamad{\`e}ne,Said },
  journal={Applied Mathematics \& Optimization},
  pages={1--28},
  year={2018},
  publisher={Springer}
}
 \bib{duncan-tembine}{article}{
  title={Linear-quadratic mean-field-type games: A direct method.},
  author={T.E. Duncan and H.Tembine},
  journal={Games},
  volume={9},
  number={1},
  pages={p.7},
  year={2018},
  publisher={Springer}
}

\bib{eisele1982nonexistence}{article}{
  title={Nonexistence and nonuniqueness of open-loop equilibria in linear-quadratic differential games},
  author={Eisele, T.},
  journal={Journal of Optimization Theory and Applications},
  volume={37},
  number={4},
  pages={443--468},
  year={1982},
  publisher={Springer}
}
 \bib{hamadene1999nonzero}{article}{
  title={Nonzero sum linear--quadratic stochastic differential games and backward--forward equations},
  author={Hamad\`ene, Sa{\i}d},
  journal={Stochastic Analysis and Applications},
  volume={17},
  number={1},
  pages={117--130},
  year={1999},
  publisher={Taylor \& Francis}
}
 
\bib{H98}{article}{
  title={Backward--forward SDEs and stochastic differential games},
  author={Hamad{\`e}ne, Sa{\i}d},
  journal={Stochastic processes and their applications},
  volume={77},
  number={1},
  pages={1--15},
  year={1998},
  publisher={Elsevier}
}
\bib{hu1995solution}{article}{
  title={Solution of forward-backward stochastic differential equations},
  author={Hu, Ying }
  author={ Peng, Shige},
  journal={Probability Theory and Related Fields},
  volume={103},
  number={2},
  pages={273--283},
  year={1995},
  publisher={Springer}
}   

\bib{hu-yong}{article}{
  title={Forward--backward stochastic differential equations with nonsmooth coefficients},
  author={Hu, Ying}
  author={ Yong, Jiongmin},
  journal={Stochastic processes and their applications},
  volume={87},
  number={1},
  pages={93--106},
  year={2000},
  publisher={Elsevier}
}
\bib{hu-peng95}{article}{
  title={Solution of forward-backward stochastic differential equations},
  author={Hu, Ying},
  author={Peng, Shige},
  journal={Probability Theory and Related Fields},
  volume={103},
  number={2},
  pages={273--283},
  year={1995},
  publisher={Springer}
  }
  \bib{Karatzas-Shreve}{book}{
   author={Karatzas, Ioannis},
   author={Shreve, Steven},
   title={Brownian motion and stochastic calculus},
     volume={113},
   edition={2},
   publisher={Springer Science \& Business Media},
   date={2012},
   }
  
  \bib{MaP}{article}{
  title={Solving forward-backward stochastic differential equations explicitly: a four step scheme},
  author={Ma, Jin},
  author={Protter, Philip},
  author={Yong, Jiongmin},
  journal={Probability theory and related fields},
  volume={98},
  number={3},
  pages={339--359},
  year={1994},
  publisher={Springer}
  
}

\bib{MP}{article}{
   author={ Miller, Enzo},
   author={Pham, Huyen},
   title={Linear-Quadratic McKean-Vlasov Stochastic Differential Games },
     booktitle={Modeling, Stochastic Control, Optimization, and Applications},
  pages={451--481},
  year={2019},
  publisher={Springer}  
}
\bib{Min}{article}{
  title={Fully coupled mean-field forward-backward stochastic differential equations and stochastic maximum principle},
  author={Min, Hui},
  author={Peng, Ying},
  author={ Qin, Yongli},
  booktitle={Abstract and Applied Analysis},
  volume={2014},
  year={2014},
  organization={Hindawi}
}

   \bib{Peng-wu}{article}{
  title={Fully coupled forward-backward stochastic differential equations and applications to optimal control},
  author={Peng, Shige},
  author={Wu, Zhen},
  journal={SIAM Journal on Control and Optimization},
  volume={37},
  number={3},
  pages={825--843},
  year={1999},
  publisher={SIAM}
}

\end{biblist}
\end{bibdiv}

\end{document}